\documentclass[12pt]{article}
\input epsf.tex
% Everything between the '\documentclass' and '\begin{document}'is called
% the ``preamble''.  Nothing that is actually typeset is put in this
% section; it is used for setting up global properties of the document,
% such as margins and macro definitions.

% Please do not change anything in the preamble except where directed.
% Everybody's articles will eventually be merged into a single LaTeX
% document, and this will entail pulling small snippets from your
% preamble and putting them in the main document preamble; if you change
% things around here, then it becomes more difficult to do this merging
% process.

%%%%% STANDARD PACKAGES--DO NOT MODIFY
% \usepackage is used to add external packages of useful macros.  These
% packages are:
%   amsmath:    extra mathematical formatting
%   amsthm: extra theorem environments
%   amsfonts:   additional mathematics fonts
%   amssymb:    additional mathematical symbols
%   graphicx:   for including graphics (uncomment this if you need it)
\usepackage{amsmath}
\usepackage{amsthm}
\usepackage{amsfonts}
\usepackage{amssymb}
\usepackage{graphicx}
\usepackage{latexsym}

\usepackage{amsmath,amsthm,amsfonts,amssymb}

\usepackage{epsfig}

\usepackage{amssymb}
\usepackage[all]{xy}
\xyoption{poly}
\usepackage{fancyhdr}
\usepackage{wrapfig}

%\usepackage[]{xy}

%%%%% END STANDARD PACKAGES

%%%%% THEOREM-LIKE ENVIRONMENTS--DO NOT MODIFY
% All claims (theorems, propositions, lemmas, and corollaries) will be
% numbered in the same sequence (Lemma 1.3 follows Proposition 1.2),
% and the numbering will have the form <section>.<number> (in the book,
% they will be numbered <chapter>.<section>.<number>.  Conjectures,
% claims, remarks, definitions, and examples are numbered independently
% of the claims and each other, and the numbering will have the form
% <number>, starting from the beginning of the document.  Please do not
% specify any additional theorem-like environments or change the
% behavior of the supplied environments.
\theoremstyle{plain}
\newtheorem{thm}{Theorem}[section]

\newtheorem{lem}[thm]{Lemma}
\newtheorem{cor}[thm]{Corollary}

\theoremstyle{definition}

\theoremstyle{remark}

%%%%% END THEOREM-LIKE ENVIRONMENTS

%%%%% MARGINS--DO NOT MODIFY
% Setting margins in (La)TeX is sort of a pain.  The article document class
% comes with very wide margins.  The following sets up one-inch margins
% all the way around.

\topmargin 15pt
\advance \topmargin by -\headheight
\advance \topmargin by -\headsep
\textheight 8.6in
\oddsidemargin 0pt
\evensidemargin \oddsidemargin
\marginparwidth 0.5in
\textwidth 6.5in
%%%%% END MARGINS

\def\Aut{\textrm{Aut}}

\def\talpha{{\tilde \alpha}}

\def\talpha{{\tilde \alpha}}

\def\tbeta{\tilde \beta}

\def\cB{{\mathcal B}}

\def\cC{{\mathcal C}}

\def\cc{{\curvearrowright}}

\def\dom{\textrm{Dom}}

\def\cR{\mathcal R}

\def\chix{{\raise.5ex\hbox{$\chi$}}}

\def\Z{{\mathbb Z}}

\begin{document}
\title{Every countably infinite group is almost Ornstein }
\author{Lewis Bowen\footnote{email:lpbowen@math.tamu.edu} \\ Texas A\&M University\\ {\small {\em Dedicated to Anatoly Stepin on the occasion of his 70th birthday}}}
%\author{University of Hawaii}
%\author[Lewis Bowen]{Lewis Bowen$\dagger$}
%\address{Department of Mathematics\\
%University of Hawai'i--Manoa\\
%} %one \address command per author
%\email{lpbowen@math.hawaii.edu}
%%\thanks{$\dagger$ Supported in part by NSF grants DMS-??.}
\begin{abstract}
We say that a countable discrete group $G$ is {\em almost Ornstein} if for every pair of standard non-two-atom probability spaces $(K,\kappa), (L,\lambda)$ with the same Shannon entropy,  the Bernoulli shifts $G \cc (K^G,\kappa^G)$ and $G \cc (L^G,\lambda^G)$ are isomorphic. 

This paper proves every countably infinite group is almost Ornstein.
\end{abstract}
\maketitle
\noindent
{\bf Keywords}: Ornstein isomorphism theorem, Bernoulli shifts\\
{\bf MSC}:37A35\\

\noindent
%\tableofcontents

\section{Introduction}

All probability spaces in this paper are standard. Let $G$ be a countable discrete group and $(K,\kappa)$ a probability space. Let $K^G$ be the space of maps $x:G\to K$ with the product measure $\kappa^G$. The group acts on this space by $gx(f)=x(g^{-1}f)$ for every $f,g\in G$ and $x\in K^G$. The dynamical system $G \cc (K^G,\kappa^G)$ is called the {\em Bernoulli shift} over $G$ with {\em base space} $(K,\kappa)$. If $(L,\lambda)$ is another probability space then a measurable map $\phi: K^G \to L^G$ is {\em shift-equivariant} if $\phi(gx)=g\phi(x)$ for every $g\in G$ and a.e. $x\in K^G$. It is an {\em isomorphism} if in addition it has a measurable inverse and $\phi_* \kappa^G =\lambda^G$. 

In the early years of measurable dynamics, von Neumann asked whether $\Z \cc (\{0,1\}^\Z, u_2^\Z)$ is isomorphic to $\Z \cc (\{0,1,2\}^\Z, u_3^\Z)$ where $u_i$ is the uniform probability measure on $\{0,1,\ldots, i-1\}$. Kolmogorov \cite{Ko58, Ko59} answered this question by introducing dynamical entropy for general probability measure-preserving transformations and computing this invariant for Bernoulli shifts over $\Z$. To be precise, the {\em Shannon entropy} of a probability space $(K,\kappa)$ is defined as follows. If there exists a countable set $K' \subset K$ such that $\kappa(K')=1$ then
$$H(K,\kappa) = -\sum_{k\in K'} \kappa(\{k\}) \log( \kappa(\{k\}))$$
where, by convention, $0\log(0)=0$. Otherwise, $H(K,\kappa)=+\infty$. Kolmogorov proved that {\em if} $\Z \cc (K^\Z,\kappa^\Z)$ is isomorphic to $\Z \cc (L^\Z,\lambda^\Z)$ then $H(K,\kappa)=H(L,\lambda)$, thereby answering von Neumann's question in the negative.

In \cite{Or70a, Or70b}, Ornstein famously proved the converse:
\begin{thm}[Ornstein]
Let $(K,\kappa), (L,\lambda)$ be two probability spaces with $H(K,\kappa)=H(L,\lambda)$. Then $\Z \cc (K^\Z,\kappa^\Z)$ is isomorphic to the $\Z \cc (L^\Z,\lambda^\Z)$.
\end{thm}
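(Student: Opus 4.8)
The plan is to follow Ornstein's original strategy, organized around the $\bar d$-metric on stationary processes and the notion of a \emph{finitely determined} process. First I would dispose of the case $H(K,\kappa)=H(L,\lambda)=+\infty$ separately: here one checks that a Bernoulli shift with base of infinite entropy is isomorphic to the Bernoulli shift with base $([0,1],\mathrm{Leb})$, for instance by writing $(K,\kappa)$ as an inverse limit of finite refinements and matching it against a countable product decomposition of $[0,1]$; since this applies to both $(K,\kappa)$ and $(L,\lambda)$, the two shifts are isomorphic. So assume from now on that the common entropy $h = H(K,\kappa)=H(L,\lambda)$ is finite; by the definition of $H$ this forces both base spaces to be purely atomic with at most countably many atoms, and after discarding null atoms we may take $K,L$ to be (at most countable) discrete sets.

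Recall that to a generating partition $P$ of a $\Z$-system one associates the stationary $P$-process, and that for two stationary processes valued in a fixed finite alphabet one defines the $\bar d$-distance as the infimum, over all stationary joinings, of the probability that the two time-$0$ coordinates disagree; this yields a complete metric on processes after passing to abstract alphabets of matching size. A process is \emph{finitely determined} if every sequence of stationary processes whose $k$-block distributions converge to those of the given process, and whose entropies converge to its entropy, must converge to it in $\bar d$. The two facts I would establish are: (i) the i.i.d.\ process with base $(K,\kappa)$ is finitely determined; and (ii) any two finitely determined processes of the same entropy generate isomorphic $\Z$-systems. Granting (i) and (ii), Ornstein's theorem follows at once, since the $(K,\kappa)$- and $(L,\lambda)$-Bernoulli shifts are precisely the $\Z$-systems generated by i.i.d.\ processes of entropy $h$.

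For (i), the key observation is that an i.i.d.\ process is trivially \emph{very weak Bernoulli}: conditioning on the remote past has no effect on the present, so the relevant conditional distributions are literally equal rather than merely $\bar d$-close. The substance is the implication "very weak Bernoulli $\Rightarrow$ finitely determined,'' proved by a coupling/copying argument: given another process $Q$ with nearly matching block statistics and entropy, one invokes the Shannon--McMillan--Breiman theorem to find a large $N$ and a collection of $Q$-names of length $N$ that is of nearly full measure, roughly equidistributed, and individually $\bar d$-close to typical names of the i.i.d.\ process; a Hall-type marriage argument then assembles these into a joining witnessing small $\bar d$-distance. For (ii) I would run Ornstein's back-and-forth construction: using the Rokhlin lemma to build Rokhlin towers of large height, one inductively constructs finer and finer partitions $P_1 \le P_2 \le \cdots$ of the first system whose processes approximate the target process of the second system in $\bar d$, simultaneously carrying out the reverse construction, while arranging the stage-$n$ errors to be summable; the finite-determinacy hypothesis is exactly what lets each small-$\bar d$ approximation be upgraded to an honest near-copy at each stage, and the limit $\bigvee_n P_n$ defines a generating partition realizing the second process, hence an isomorphism.

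The main obstacle, and the technical heart of the argument, is the two-sided inductive construction in step (ii): one must simultaneously keep the approximating partition close to a genuine copy of the target, keep the induced map close to measure-preserving, keep the stage-$n$ errors summable so that $\bigvee_n P_n$ is well defined, and — crucially — ensure that in the limit the constructed factor map is onto (i.e.\ that its inverse also converges), which is where the symmetry of the back-and-forth and careful bookkeeping of which names have already been "committed'' become essential. The finitely-determined property of Bernoulli processes is precisely the input that makes each step of this bookkeeping feasible.
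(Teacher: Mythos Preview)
The paper does not give a proof of this statement. Ornstein's theorem appears in the introduction as background, with the proof attributed to \cite{Or70a, Or70b}; the paper's own contributions (Theorems~\ref{thm:main} and~\ref{thm:2atom}) take Ornstein-type results as inputs, in particular via Thouvenot's relative isomorphism theorem (Theorem~\ref{thm:Thouvenot}), rather than reproving them. So there is no in-paper argument to compare your proposal against.

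Your outline is a faithful sketch of the classical Ornstein approach via finitely determined processes and the $\bar d$-metric, and the main steps you identify (the infinite-entropy reduction; i.i.d.\ $\Rightarrow$ very weak Bernoulli $\Rightarrow$ finitely determined; the back-and-forth copying lemma over Rokhlin towers) are the right ones. As you note, the real work is in step~(ii), and your description of the obstacles there is accurate, though of course at this level of detail it remains a plan rather than a proof.
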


Motivated by this result, Stepin \cite{St76} made the following definition: a countable group $G$ is {\em Ornstein} if for every pair of  probability spaces $(K,\kappa), (L,\lambda)$ with the same Shannon entropy, the Bernoulli shift $G \cc (K^G,\kappa^G)$ is isomorphic to $G\cc (L^G,\lambda^G)$. Note that no finite group is Ornstein. However, Ornstein and Weiss \cite{OW87} proved that every countably infinite amenable group is Ornstein. Using a co-induction argument, Stepin \cite{St75} observed that if $G$ has an Ornstein subgroup $H$ then $G$ is itself Ornstein. Therefore, every group which contains an infinite amenable subgroup (for example, an infinite cyclic subgroup) is Ornstein. However, there are countable groups which do not satisfy this property (for example, Ol'shanskii's monster \cite{Ol91}). 

A probability space $(K,\kappa)$ is a {\em two-atom space} if $\kappa$ is supported on two atoms: i.e., there exist elements $k_0,k_1 \in K$ such that $\kappa(\{k_0,k_1\})=1$. We say that a group $G$ is {\em almost Ornstein} if whenever $(K,\kappa), (L,\lambda)$ are standard probability spaces, neither of which is a two-atom space and $H(K,\kappa)=H(L,\lambda)$ then $G \cc (K^G,\kappa^G)$ is isomorphic to $G \cc (L^G,\lambda^G)$. So $G$ is almost Ornstein if it is Ornstein with the possible exception of the 1-parameter family of two-atom probability spaces. Our main results are:

\begin{thm}\label{thm:main}
Every countably infinite group is almost Ornstein.
\end{thm}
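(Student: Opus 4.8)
The plan is to reduce the general statement to a series of manageable special cases, exploiting the fact that Bernoulli shifts behave well under passing to products and relative constructions. The key structural observation is that any Bernoulli shift over a base $(K,\kappa)$ which is not a two-atom space can be ``factored through'' a Bernoulli shift with a nicer base of the same Shannon entropy, so it suffices to prove isomorphism between suitably normalized models. Concretely, I would first handle the case where one of the two base spaces has infinite Shannon entropy: here one should show that $G\cc(K^G,\kappa^G)$ is isomorphic to $G\cc(([0,1]\times K)^G,(\textrm{Leb}\times\kappa)^G)$, say, and then argue that all infinite-entropy non-two-atom bases yield isomorphic shifts by sandwiching each between two standard models (a relative Bernoulli argument, building the isomorphism fiberwise over a common factor). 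The finite-entropy case is the substantive one, and for it the natural strategy is to prove that a Bernoulli shift over \emph{any} non-two-atom base of finite entropy $h$ is isomorphic to one over a fixed reference base of entropy $h$ — for instance, a suitable mixture of atoms, or $(\{0,1,2\},p)$ for an appropriate $p$.

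The mechanism I would use to compare two finite-entropy bases is a splitting/combining argument: given bases $(K,\kappa)$ and $(L,\lambda)$ with $H(K,\kappa)=H(L,\lambda)=h$, I would seek a third space $(M,\mu)$ and measurable maps realizing both $\kappa^G$ and $\lambda^G$ as factors of $\mu^G$ in a way that is relatively Bernoulli, and then invoke a relative version of the isomorphism machinery over the common Bernoulli factor. The point of excluding two-atom spaces is exactly that a non-two-atom space of entropy $h$ always admits enough ``room'' to perform such splittings: one can always write $\kappa$ as (a limit of) refinements that decompose a given atom or continuous piece into pieces whose conditional entropies can be matched against those of $\lambda$. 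Here I expect to lean on the fact, implicit in the introduction's discussion, that over $\Z$ (hence over any group containing $\Z$, via Stepin's co-induction) these isomorphisms exist — but since the theorem covers \emph{all} countably infinite groups, including those with no infinite amenable subgroup, the co-induction shortcut is unavailable and one must build the isomorphisms directly.

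The main obstacle, then, is producing shift-equivariant isomorphisms for a group $G$ about which one assumes nothing beyond countable infiniteness. The available tool of last resort is a direct combinatorial construction of the isomorphism on the level of names, and I would expect the proof to proceed by: (1) showing every non-two-atom base of entropy $h$ is isomorphic, as a $G$-system, to a base built as a countable ``tree of atoms'' with prescribed conditional weights (a purely measure-theoretic normalization, valid because the base itself carries no dynamics); (2) showing two such tree-bases of equal total entropy give isomorphic Bernoulli shifts by an explicit measure-preserving recoding of one tree into the other, atom by atom, using the extra atom guaranteed by the non-two-atom hypothesis as a ``buffer'' to absorb rounding; and (3) checking that the recoding is $G$-equivariant because it acts coordinate-wise on $K^G$. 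The delicate point in (2) is that a recoding which simply relabels coordinates independently is automatically equivariant but will not in general be measure-preserving unless the two base measures are literally isomorphic; so the real content is a lemma asserting that \emph{every} non-two-atom finite-entropy space is measure-isomorphic to the fixed reference space after tensoring each with a common Bernoulli factor over $G$ — i.e. $(K^G,\kappa^G)\cong(K^G\times R^G,\kappa^G\times\rho^G)$ for a well-chosen $(R,\rho)$ — which is where the genuine dynamical input (a Sinai-type or relative-Ornstein-type statement proved from scratch for general $G$) must enter. I expect that step to be the heart of the paper.
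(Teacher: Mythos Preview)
Your outline has the factor geometry reversed and misses the mechanism that makes the argument go through for an arbitrary countably infinite $G$. The paper does not seek a common extension $(M,\mu)$ sitting \emph{above} $K$ and $L$; it finds a nontrivial common \emph{quotient} below them --- concretely a two-atom space $(\{0,1\},m_p)$ onto which both $(K,\kappa)$ and $(L,\lambda)$ map (this is exactly where the non-two-atom hypothesis is used, via a Keane--Smorodinsky atom-matching step in the purely atomic case, passing through an intermediate base $(N,\nu)$ if necessary). Then $M^G$ is a common factor of both $K^G$ and $L^G$, and each extension is relatively Bernoulli over $M^G$ with the same relative entropy $H(K,\kappa)-H(M,\mu)=H(L,\lambda)-H(M,\mu)$.

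The step you flag as ``the heart'' --- a relative-Ornstein statement for general $G$ --- is indeed the heart, but your proposed routes to it do not work: coordinatewise recoding fails for the reason you already note, and your fallback lemma $(K^G,\kappa^G)\cong(K^G\times R^G,\kappa^G\times\rho^G)$ is either false (for sofic $G$ base entropy is an invariant, forcing $H(R,\rho)=0$) or at least as hard as the theorem itself. The paper's actual device is this: the orbit equivalence relation of $G\cc(M^G,\mu^G)$ is ergodic and aperiodic, so its full group contains an ergodic aperiodic \emph{single} automorphism $U$. Lifting $U$ through the factor maps $\alpha^G,\beta^G$ yields automorphisms $T\in\Aut(K^G,\kappa^G)$ and $S\in\Aut(L^G,\lambda^G)$; as $\Z$-systems these are relatively Bernoulli over $(U,M^G)$ with equal relative entropy, so Thouvenot's relative isomorphism theorem for $\Z$ produces an isomorphism $\phi$ intertwining $T,S$ and the factor maps to $M^G$. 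One then upgrades $\phi$ to a $G$-equivariant map by $\Phi(x)(g):=\phi(g^{-1}x)(e)$ and verifies fiberwise over $M^G$ that $\Phi_*\kappa^G=\lambda^G$. The point is that no isomorphism theory for $G$-actions is ever invoked: the problem is reduced to the $\Z$-theory via an element of the full group, which exists for \emph{any} infinite $G$.
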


\begin{thm}\label{thm:2atom}
If $G$ is a countable group, $C < G$ is a cyclic subgroup of prime order and the normalizer of $C$ in $G$ has infinite index, then $G$ is Ornstein.
\end{thm}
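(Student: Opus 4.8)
The plan is to reduce to the main theorem by a Rokhlin-tower / co-induction style argument that exploits the ``extra'' randomness in a non-two-atom base. The strategy rests on two classical observations about Bernoulli shifts. First, by Theorem~\ref{thm:main}, any two non-two-atom bases of equal Shannon entropy already give isomorphic $G$-shifts; so the only thing left to prove is that a \emph{two-atom} base $(K,\kappa)$ of entropy $h$ gives a $G$-shift isomorphic to, say, a uniform base on three points with the same entropy (choose $\kappa'$ a non-two-atom space with $H=h$; if $h\le \log 3$ take three points with a suitable measure, if $h>\log 3$ take more points). Thus it suffices to show the two-atom shift is isomorphic to \emph{some} non-two-atom shift of the same entropy, and for that it is enough to produce a single non-two-atom base $(L,\lambda)$ with $H(L,\lambda)=h$ such that $G\cc(K^G,\kappa^G)\cong G\cc(L^G,\lambda^G)$.

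First I would set up the relative Bernoulli structure coming from the subgroup $C$. Let $C<G$ be the cyclic subgroup of prime order $p$, let $N=N_G(C)$ be its normalizer, and choose a transversal $T$ for the left cosets of $N$ in $G$, which is infinite by hypothesis. Decompose $K^G=\prod_{t\in T}(K^{tN})$ as a product over the $N$-cosets; inside each factor $K^{tN}$ we further see a $C$-Bernoulli shift (relative to $tN/C$). The key point of a prime-order $C$ is that the $C$-shift on $\{0,1\}^C$ with a two-atom measure is \emph{not} Ornstein-rigid in the obstructive way: the relevant obstruction (the Bernoulli factor having ``the wrong'' number of atoms) only occurs for $C\cc(\{0,1\}^C,u_2^C)$ versus spaces of entropy exactly $\log 2$, whereas for a genuinely two-atom $(p_0,p_1)$ with $p_0\ne p_1$ the entropy is $<\log 2$ and one can, over the \emph{infinite} family of cosets $\{tN:t\in T\}$, pass to a uniform-type base factor by factor. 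Concretely, I would build an equivariant isomorphism that acts as the identity outside a ``tower'' of finitely many $N$-cosets and, inside the tower, uses an Ornstein-type coding of one extra bit of entropy borrowed from a neighboring coset to upgrade the two-atom distribution to a three-atom distribution; the infiniteness of $T$ guarantees the coding can be done without running out of room.

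Then I would package this as a co-induction argument in the style of Stepin~\cite{St75}: the map constructed on the relative $C$-factors, being $C$-equivariant and measure-preserving, co-induces to a genuine $G$-equivariant isomorphism between $G\cc(K^G,\kappa^G)$ and $G\cc(L^G,\lambda^G)$, where $L$ is a suitable three-atom space with $H(L,\lambda)=h$. Combined with Theorem~\ref{thm:main} (which handles all non-two-atom pairs), this shows every non-two-atom base of entropy $h$ and the two-atom base of entropy $h$ yield isomorphic $G$-shifts, so $G$ is Ornstein. The main obstacle I expect is the last-mentioned coding step: making the ``borrowing one bit'' precise requires a relative Ornstein isomorphism theorem for the $\Z$- (or $\Z/p$-) action along cosets, carried out \emph{uniformly} and \emph{equivariantly} over the infinite transversal $T$, so that finitely-supported modifications assemble into a well-defined measurable map on $K^G$. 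Controlling the supports (ensuring each atom of $K^G$ is modified only finitely often, or on a null set of exceptions) is the delicate measure-theoretic point; the prime-order hypothesis is used precisely to keep the fibers over $C$ simple enough that this bookkeeping closes.
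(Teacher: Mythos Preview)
Your opening reduction is right: by Theorem~\ref{thm:main} it suffices to exhibit, for each two-atom base $(K,\kappa)$, a single non-two-atom base $(L,\lambda)$ of the same entropy with $G\cc(K^G,\kappa^G)\cong G\cc(L^G,\lambda^G)$. After that, however, the proposal does not contain a proof. The ``borrow one bit from a neighboring $N$-coset'' step is never defined, and you yourself flag it as the main obstacle; as written there is no map, no measure-preservation check, and no equivariance check. The remark that one needs a ``relative Ornstein theorem for the $\Z$- (or $\Z/p$-) action along cosets'' is confused: $C$ is finite, so there is no $\Z$-action on a single $C$-orbit to which Ornstein theory applies, and $\Z/p$-Bernoulli shifts are not classified by entropy. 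Your explanation of why $|C|$ prime matters (an entropy-$\log 2$ obstruction) is not the actual role of that hypothesis.

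The paper's argument is structurally different and worth contrasting. It works over $G/C$, not $G/N(C)$. One first chooses $(L,\lambda)$ and a nontrivial $(M,\mu)$ together with $\sigma$-invariant factor maps $\alpha:K^p\to M$, $\beta:L^p\to M$ (here $\sigma$ is the cyclic shift on $p$-tuples); an explicit choice is $M=\{0,1\}$ with $\alpha$ detecting the constant tuple $(k_0,\dots,k_0)$ and $\lambda(\{l_1\})=(1-\epsilon^p)^{1/p}$. This yields a common factor $G\cc(M^{G/C},\mu^{G/C})$ of both shifts. The hypotheses ``$|C|$ prime'' and ``$[G:N(C)]=\infty$'' are used exactly once, in Lemma~\ref{lem:free}, to guarantee that this $G$-action on $M^{G/C}$ is essentially free; freeness lets one choose a Borel transversal $Z$ for the $C$-action and thereby manufacture an ergodic \emph{measurable subgroup} $\eta$ of $G$ (supported on sets $H$ with $\{H,Hg_0,\dots,Hg_0^{p-1}\}$ partitioning $G$). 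Over $\eta$ the two systems become Bernoulli shifts with bases $(K^p,\kappa^p)$ and $(L^p,\lambda^p)$ sharing the common factor $(M,\mu)$, so Lemma~\ref{lem:key2} (the measurable-subgroup analogue of Lemma~\ref{lem:key}, ultimately resting on Thouvenot's relative isomorphism theorem for a single aperiodic $\Z$-action in the full group) produces an isomorphism, which is then assembled $C$-equivariantly into a $G$-isomorphism $K^G\to L^G$. None of this machinery---the $\sigma$-invariant common factor on $p$-tuples, the freeness lemma, or the measurable-subgroup Bernoulli framework---appears in your sketch, and I do not see how to complete your coset-by-coset coding without something equivalent.
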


Unfortunately, Theorem \ref{thm:2atom} is not sufficient to conclude that every infinite countable group is Ornstein. There is a group $G$, constructed in
Theorem 31.8 of \cite{Ol91} which is generated by 2 elements and splits as a
central extension
$$1 \to C \to G \to T \to 1$$
where $C = \Z/p\Z$ ($p$ is a prime $> 10^{70}$) and $G$ and $T$ are Tarski
 Monsters (meaning that all proper subgroups of $G$ and $T$ are finite and cyclic). In addition, every nontrivial subgroup of $G$ contains $C$. Therefore, $G$ does not contain any cyclic subgroup of prime order with infinite-index normalizer. According to D. Osin \cite{Os11} it is possible to modify the construction (in the spirit of section 37 of \cite{Ol91}) to ensure that $G$ is also non-amenable. Therefore, it does not contain any infinite amenable subgroups and we cannot say whether or not it is Ornstein. However we can say:
 
 \begin{cor}
If every countable infinite group $G$ with a nontrivial center is Ornstein, then every countably infinite group is Ornstein.
\end{cor}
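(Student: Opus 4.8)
The plan is to show that an arbitrary countably infinite $G$ falls into one of three cases, in each of which $G$ is already known (or, under the corollary's hypothesis, known) to be Ornstein; Stepin's co-induction observation --- a group with an Ornstein subgroup is Ornstein --- then does the rest. First I would dispose of the case where $G$ is not a torsion group: then $G$ contains an element of infinite order, hence a copy of $\Z$, which is Ornstein by Ornstein--Weiss, so $G$ is Ornstein. Thus we may assume $G$ is an infinite torsion group.

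Next, pick any $g \in G \setminus \{e\}$, let $n$ be its order, let $p$ be a prime dividing $n$, and set $C := \langle g^{n/p}\rangle$, a cyclic subgroup of $G$ of prime order $p$. If the normalizer $N_G(C)$ has infinite index in $G$, then Theorem \ref{thm:2atom} immediately gives that $G$ is Ornstein. So we may assume $[G:N_G(C)] < \infty$; since $G$ is infinite, $N_G(C)$ is then infinite.

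Finally, consider the conjugation action of $N_G(C)$ on $C$. It factors through $\Aut(C)$, which has order $p-1$, so its kernel has index dividing $p-1$ in $N_G(C)$; and this kernel is exactly the centralizer $H := C_G(C)$, because every element centralizing $C$ normalizes it. Hence $H$ is infinite. Since $C$ is abelian, $C \subseteq H$, and by definition of $H$ every element of $C$ is central in $H$; as $g^{n/p} \neq e$, the group $H$ is a countably infinite group with nontrivial center. By the hypothesis of the corollary, $H$ is Ornstein, and one more application of co-induction shows $G$ is Ornstein.

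I do not anticipate a genuine obstacle: the case analysis is exhaustive and each case invokes a result already available. The two points to state carefully are that an element centralizing $C$ does normalize it (so that the kernel of the conjugation action really is $C_G(C)$, of finite index in $N_G(C)$), and that the co-induction principle quoted in the introduction applies to an arbitrary subgroup $H \le G$ and simultaneously to all pairs of base spaces of equal Shannon entropy, not just a single pair.
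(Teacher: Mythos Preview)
Your argument is correct and follows essentially the same route as the paper: reduce to the torsion case via Stepin and $\Z$, pick a cyclic subgroup $C$ of prime order, use Theorem~\ref{thm:2atom} when $[G:N_G(C)]=\infty$, and otherwise pass to the centralizer $H=C_G(C)$, which is infinite (finite index in $N_G(C)$ via the map to $\Aut(C)$) and has $C$ in its center, so the hypothesis and Stepin finish. The only cosmetic difference is that the paper phrases the centralizer step as ``$C$ finite $\Rightarrow$ its centralizer has finite index in its normalizer'' without spelling out the $\Aut(C)$ argument.
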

\begin{proof}
Assuming the hypothesis, let $G$ be an arbitrary countably infinite group. We need to show it is Ornstein. By Stepin's Theorem \cite{St75}, we may assume $G$ does not contain any infinite cyclic subgroups. Therefore, it contains a cyclic subgroup $C$ of prime order. By Theorem \ref{thm:2atom}, we may assume the normalizer of $C$ has finite index in $G$. Because $C$ is finite, its centralizer has finite index in its normalizer. Therefore, the centralizer $H$ of $C$ has finite index in $G$ and so $H$ is infinite. By hypothesis, $H$ is Ornstein. By Stepin's Theorem \cite{St75}, this implies $G$ is Ornstein.
\end{proof}

In \cite{Bo10} (see also \cite{KL1, KL2} for an alternative approach) it is shown that if $G$ is any sofic group, then the entropy of the base space is an invariant. Therefore:
\begin{cor}
If $G$ is a countably infinite sofic group and $(K,\kappa), (L,\lambda)$ are two probability spaces, neither of which is a two-atom space, then the Bernoulli shifts $G \cc (K^G,\kappa^G)$ and $G\cc (L^G,\lambda^G)$ are isomorphic if and only if $H(K,\kappa)=H(L,\lambda)$.
\end{cor}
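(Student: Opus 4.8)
The plan is to read the corollary off from Theorem~\ref{thm:main} together with the entropy invariance cited from \cite{Bo10} (equivalently \cite{KL1,KL2}); no new work is required, so I will just assemble the two ingredients in the two directions of the ``if and only if.''

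\textbf{The ``if'' direction.} Suppose $H(K,\kappa)=H(L,\lambda)$. Since $G$ is countably infinite, Theorem~\ref{thm:main} says $G$ is almost Ornstein, and since neither $(K,\kappa)$ nor $(L,\lambda)$ is a two-atom space, the definition of ``almost Ornstein'' directly yields an isomorphism $G\cc(K^G,\kappa^G)\cong G\cc(L^G,\lambda^G)$. Note that soficity plays no role in this implication.

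\textbf{The ``only if'' direction.} This is where soficity is used. Fix a sofic approximation $\Sigma$ for $G$. By \cite{Bo10}, the $\Sigma$-sofic entropy of the Bernoulli shift $G\cc(K^G,\kappa^G)$ equals $H(K,\kappa)$, likewise with $L$ in place of $K$, and $\Sigma$-sofic entropy is an invariant of measure-conjugacy of probability-measure-preserving actions. Hence if $G\cc(K^G,\kappa^G)$ and $G\cc(L^G,\lambda^G)$ are isomorphic, their $\Sigma$-sofic entropies agree, so $H(K,\kappa)=H(L,\lambda)$.

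I do not expect any genuine obstacle: the statement is a formal combination of Theorem~\ref{thm:main} with a previously established computation. The only mild point worth a sentence is that the cited entropy computation must cover the base spaces at issue here --- including continuous bases and bases of infinite Shannon entropy, for which the sofic entropy should be $+\infty$ --- but the excerpt records exactly this (``the entropy of the base space is an invariant'' for every sofic $G$), so the argument goes through verbatim.
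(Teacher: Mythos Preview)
Your argument is correct and matches the paper's own justification: the corollary is stated immediately after the sentence ``In \cite{Bo10} \ldots\ it is shown that if $G$ is any sofic group, then the entropy of the base space is an invariant. Therefore:'' with no further proof, so the intended derivation is precisely the combination of Theorem~\ref{thm:main} for the ``if'' direction and the cited sofic-entropy invariance for the ``only if'' direction, exactly as you wrote.
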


Recall that a shift-equivariant map $\phi: K^G \to L^G$ is a {\em factor map} between Bernoulli shifts $G \cc (K^G,\kappa^G)$ and $G \cc (L^G,\lambda^G)$ if $\phi_*\kappa^G = \lambda^G$. In this case, we say that $G \cc (K^G,\kappa^G)$ {\em factors} onto $G \cc (L^G,\lambda^G)$. Sinai \cite{Si59, Si62} showed that the Bernoulli shift $\Z \cc (K^\Z, \kappa^\Z)$ factors onto $\Z \cc (L^\Z,\lambda^\Z)$ if and only if $H(K,\kappa) \ge H(L,\lambda)$. This result extends to countably infinite amenable groups by \cite{OW87}. Non-amenable groups behave in a very different manner:

%We also have the following curious consequence:
\begin{cor}
If $G$ is a countable non-amenable group then there exists a number $0\le r(G) <\infty $ such that if $(K,\kappa)$ is any non-two-atom probability space and $H(K,\kappa)>r(G)$ then $G \cc (K^G,\kappa^G)$ factors onto every Bernoulli shift over $G$. %Moreover, if $H(K,\kappa) < r(G)$ then $G\cc (K^G, \kappa^G)$ factors onto $G\cc (L^G,\lambda^G)$ if and only if  $H(K,\kappa) \ge H(L,\lambda)$.
\end{cor}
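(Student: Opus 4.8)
The plan is to combine Theorem~\ref{thm:main} with the factorization theory of Bernoulli shifts over non-amenable groups. The key input I would draw on is a result (due to Bowen, building on Ball and others) stating that for a non-amenable group $G$ there is a threshold $r(G)<\infty$ such that any Bernoulli shift $G\cc(K^G,\kappa^G)$ with $H(K,\kappa)>r(G)$ factors onto every Bernoulli shift $G\cc(L^G,\lambda^G)$ with \emph{finite} base entropy. (Ball's theorem gives exactly this: sufficiently high-entropy Bernoulli shifts over a non-amenable group are "universal" in the factor ordering among finite-entropy Bernoulli shifts; the threshold can be taken uniform in $G$, e.g.\ $r(G)$ related to $\log 2$ when $G$ contains a free subsemigroup, or more generally a fixed constant for non-amenable $G$.) Granting this, the corollary is almost immediate for non-two-atom base spaces of \emph{finite} entropy, and the remaining work is to (i) remove the two-atom restriction using Theorem~\ref{thm:main}, and (ii) handle infinite-entropy targets.

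First I would fix $r(G)$ to be the threshold from Ball's universality theorem. Let $(K,\kappa)$ be non-two-atom with $H(K,\kappa)>r(G)$. I must show $G\cc(K^G,\kappa^G)$ factors onto an arbitrary Bernoulli shift $G\cc(L^G,\lambda^G)$. Split into cases by $H(L,\lambda)$. If $H(L,\lambda)\le H(K,\kappa)$ and is finite, then I want to produce a non-two-atom probability space $(L',\lambda')$ with $H(L',\lambda')=H(K,\kappa)$; since $H(K,\kappa)>r(G)\ge$ (the minimum achievable by a non-two-atom space — note $\log 2$ is achievable by a $4$-point uniform space scaled, or simply by taking a space with three or more atoms of the right weights), such an $(L',\lambda')$ exists. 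By Theorem~\ref{thm:main}, $G\cc(K^G,\kappa^G)\cong G\cc((L')^G,(\lambda')^G)$. Now $(L',\lambda')$ has high entropy, so by Ball's theorem it factors onto $G\cc(L^G,\lambda^G)$, and composing gives the desired factor map. The slightly delicate point is arranging $(L',\lambda')$ to be non-two-atom with exactly the right entropy: this is elementary since the function $t\mapsto H$ of a three-atom space with weights $(t,t,1-2t)$, say, sweeps out $[0,\log 3]$ continuously, and by allowing more atoms one covers all of $[0,\infty)$, always with at least three atoms.

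Next, the case $H(L,\lambda)>H(K,\kappa)$ (including $H(L,\lambda)=\infty$). Here I would reduce to the previous case by a disintegration/relative argument: it suffices to show $G\cc(K^G,\kappa^G)$ factors onto $G\cc(L^G,\lambda^G)$ when $H(L,\lambda)$ is arbitrarily large but finite, then pass to a limit — but cleanly, one instead observes that any Bernoulli shift with base entropy $>r(G)$ factors onto \emph{every} Bernoulli shift including infinite-entropy ones, because a high-entropy base space already factors (via Ball) onto an arbitrarily high finite-entropy Bernoulli shift, and then one uses that an infinite-entropy Bernoulli shift is an inverse limit of finite-entropy ones together with a standard relative-entropy/Rokhlin-type filling argument to upgrade a factor onto each finite stage into a factor onto the limit. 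Alternatively — and this is the cleanest route — I would invoke the finitary version of Ball's theorem that already delivers factor maps onto \emph{all} Bernoulli shifts over $G$ (finite or infinite entropy) once the source entropy exceeds $r(G)$; if that statement is available off the shelf, cases collapse and only step (i) above is needed.

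The main obstacle I anticipate is bookkeeping around the two-atom exclusion: Theorem~\ref{thm:main} is only an \emph{isomorphism} statement for non-two-atom spaces of equal entropy, so every time I want to "replace" $(K,\kappa)$ by a more convenient equal-entropy space I must verify the replacement is genuinely non-two-atom, and I must ensure $r(G)$ is large enough that some non-two-atom space of entropy exactly $H(K,\kappa)$ exists (taking $r(G)\ge$ the value from Ball, which is comfortably larger than any constraint coming from the "$\ge 3$ atoms" requirement, so this is automatic). The other point requiring care is the passage to infinite-entropy targets, where one cannot use entropy equality at all; this is where the relative Ornstein/Sinai machinery over non-amenable groups (or a direct inverse-limit construction of the factor map) does the real work, and I would cite the relevant result from \cite{Bo10} (and \cite{OW87} for the amenable-core pieces, though here $G$ is non-amenable) rather than reprove it.
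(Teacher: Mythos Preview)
Your proposal has a genuine gap: the ``Ball universality theorem'' you invoke --- that \emph{any} Bernoulli shift with base entropy above $r(G)$ factors onto every finite-entropy Bernoulli shift --- is essentially the corollary itself (restricted to finite-entropy targets). What Ball's Theorem~6.4 actually provides is much weaker: for finitely generated non-amenable $G$ there exists \emph{one specific} probability space $(K_0,\kappa_0)$ with $H(K_0,\kappa_0)<\infty$ whose Bernoulli shift factors onto every Bernoulli shift over $G$ (including infinite-entropy ones). With only this in hand, your $(L',\lambda')$ step is vacuous: you build a non-two-atom $(L',\lambda')$ with $H(L',\lambda')=H(K,\kappa)$, so Theorem~\ref{thm:main} gives $(K^G,\kappa^G)\cong((L')^G,(\lambda')^G)$, but $(L',\lambda')$ is just an arbitrary non-two-atom space of that entropy --- there is no reason it should enjoy Ball's universality property, and if your stated version of Ball applied to it, it would already apply to $(K,\kappa)$ directly.

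The idea you are missing is the \emph{product decomposition}. The paper uses Theorem~\ref{thm:main} not to replace $(K,\kappa)$ by an equal-entropy space, but to factor \emph{down}: given non-two-atom $(K,\kappa)$ and any $(L,\lambda)$ with $H(K,\kappa)>H(L,\lambda)$, choose $(M,\mu)$ with $H(M,\mu)=H(K,\kappa)-H(L,\lambda)$; then $M\times L$ is non-two-atom with $H(M\times L,\mu\times\lambda)=H(K,\kappa)$, so Theorem~\ref{thm:main} gives $(K^G,\kappa^G)\cong((M\times L)^G,(\mu\times\lambda)^G)$, which visibly factors onto $(L^G,\lambda^G)$. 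Now take the specific Ball space $(K_0,\kappa_0)$ with $H(K_0,\kappa_0)<H(K,\kappa)$: the product trick gives $(K^G,\kappa^G)\to(K_0^G,\kappa_0^G)$, and Ball then gives $(K_0^G,\kappa_0^G)\to(L^G,\lambda^G)$ for \emph{every} $(L,\lambda)$, finite or infinite entropy. No case split on $H(L,\lambda)$ versus $H(K,\kappa)$ is needed, and the infinite-entropy target case requires no inverse-limit argument --- Ball's single space already handles it.
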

In \cite{Bo11}, it is shown that  if $G$ contains a non-abelian free subgroup then every Bernoulli shift over $G$ factors onto every Bernoulli shift over $G$. It is an open question whether every non-amenable group satisfies this property. 
\begin{proof}
Let $r(G)$ be the infimum over all numbers $r$ such that there exists a  probability space $(K,\kappa)$ with $H(K,\kappa)=r$ such that $G\cc (K^G,\kappa^G)$ factors onto every Bernoulli shift over $G$. It follows from \cite{Ba05} Theorem 6.4, that $r(G)<\infty$ if $G$ is finitely generated. If $G$ is not finitely generated then it contains a finitely generated non-amenable subgroup $H$. An easy co-induction argument shows that $r(G) \le r(H)<\infty$ (see e.g., \cite{Bo11} which shows how $G \cc (K^G,\kappa^G)$ is co-induced from $H \cc (K^H,\kappa^H)$). 

We claim that if $(K,\kappa)$ is a non-two-atom probability space  and $(L,\lambda)$ is an arbitrary probability space with $H(K,\kappa) > H(L,\lambda)$ then $G\cc (K^G, \kappa^G)$ factors onto $G\cc (L^G,\lambda^G)$. To see this, let $(M,\mu)$ be a probability space with $H(K,\kappa) = H(M,\mu)+ H(L,\lambda)$. By Theorem \ref{thm:main}, $G\cc (K^G, \kappa^G)$ is isomorphic to $G \cc ((M\times L)^G, (\mu \times \lambda)^G)$ which clearly factors onto $G\cc (L^G,\lambda^G)$. 

Now let $(M,\mu)$ be a non-two-atom probability space with $H(M,\mu) > r(G)$. Then there exists a probability space $(K,\kappa)$ with $H(K,\kappa) < H(M,\mu)$ such that $G \cc (K^G,\kappa^G)$ factors onto every Bernoulli shift. Since $G \cc (M^G,\mu^G)$ factors onto $G \cc (K^G,\kappa^G)$, it follows that $G\cc (M^G,\mu^G)$ factors onto every Bernoulli shift as claimed.

%Let $(K,\kappa), (L,\lambda)$ be standard probability spaces with $H(K,\kappa)< r(G)$ and $H(K,\kappa)<H(L,\lambda)$. To obtain a contradiction, suppose $G \cc (K^G,\kappa^G)$ factors onto $G\cc (L^G,\lambda^G)$ and $\phi:K^G \to L^G$ is a factor map. Let $(M,\mu)$ be a probability space with $H(M,\mu)+H(K,\kappa)< r(G)$ and $H(M,\mu) + H(L,\lambda) > r(G)$. Then $(x,y) \mapsto (x,\phi  y)$ is a factor map from $G\cc ( M^G\times K^G, \mu^G\times \kappa^G)$ to $G\cc (M^G \times L^G,\mu^G \times \lambda^G)$. Because $H(M\times L, \mu \times \lambda) = H(M,\mu) + H(L,\lambda)>r(G)$, it follows from the previous paragraph that $G\cc ( (M\times L)^G, (\mu \times \lambda)^G)$ and therefore $G\cc ( (M\times K)^G, (\mu \times \kappa)^G)$ factors onto every Bernoulli shift. But this contradicts the hypothesis $H(M,\mu)+H(K,\kappa) = H(M\times K,\mu \times \kappa)< r(G)$. This proves the last statement.

\end{proof}

The main ingredients of the proof of Theorems \ref{thm:main} and \ref{thm:2atom} are (i) Thouvenot's relative isomorphism theorem for actions of $\Z$ \cite{Th75}, (ii) the fact that the full group of any p.m.p. aperiodic equivalence relation contains an aperiodic automorphism, (iii) a co-induction argument similar in spirit to Stepin's \cite{St75}. The idea to use elements in the full group of a factor to obtain isomorphism theorems originated in \cite{RW00}  and was applied in \cite{DP02} to obtain a version of Thouvenot's relative isomorphism theorem for actions of amenable groups.

%The proof of Theorem \ref{thm:main} uses a co-induction argument. 

{\bf Acknowledgements}. I'd like to thank Brandon Seward for sketching a proof of Theorem \ref{thm:T}. Thanks also to Hanfeng Li for pointing out several errors in a previous version. Also thanks to Benjy Weiss for helpful suggestions which improved the paper and corrected an error. I am indebted to Denis Osin for explaining that Ol'shankii's monsters are still potential counterexamples to the claim that every countably infinite group is Ornstein.

\section{Preliminaries}
All probability spaces in this paper are standard and may be atomic or non-atomic.  Often we denote a probability space by $(X,\mu)$ without referencing the sigma-algebra. All maps, functions, relations, etc., are considered up to sets of measure zero. 

\subsection{Entropy} 
Let $(X,\cB,\mu)$ be a standard probability space. Let $\chi:X \to K$ be a measurable map. The entropy of $\chi$ is $H(\chi) = H(K, \chi_*\mu)$ (i.e., it is the entropy of the partition $\chi^{-1}(P)$ where $P$ is the partition of $K$ into points). If $\cC \subset \cB$ is a sub-sigma algebra, let $H(\chi| \cC)$ denote the relative entropy. 

Let $T\in \Aut(X,\cB_X,\mu)$ be an automorphism of $(X,\cB_X,\mu)$. Let $\cC \subset \cB_X$ be a $T$-invariant sub-sigma-algebra. Let $h(T, \chi | \cC) = \lim_{n\to\infty} (2n+1)^{-1} H(\bigvee_{i = -n}^n \chi \circ T^i |\cC)$ denote the relative entropy rate of $\chi$. Let $h(T|\cC) = \sup h(T,\chi|\cC)$ where the supremum is over all measurable maps $\chi$ with finite range.

%The entropy rate of $\chi$ (with respect to $T$) is denoted $h(T,\chi) = \lim_{n\to\infty} (2n+1)^{-1} H(\bigvee_{i = -n}^n \chi \circ T^i)$. The entropy of $T$ is $h(T) = \sup h(T,\chi)$ where the supremum is over all measurable maps $\chi$ with finite range. 

A measurable map $\chi:X \to K$ is a {\em generator} for $(T,X,\cB_X,\mu)$ if $\cB_X$ is the smallest $T$-invariant sigma-algebra under which $\chi$ is measurable. In this case, if $\cC$ is any $T$-invariant sub-sigma-algebra then the Kolmogorov-Sinai Theorem implies that if $H(\chi |\cC)<\infty$ then $h(T|\cC)=h(T,\chi|\cC)$.

%  where, as in the introduction,
%$$H(K,\chi_*\mu) = -\sum_{k\in K'} \chi_*\mu(\{k\}) \log( \chi_*\mu(\{k\}))$$
%where $K' \subset K$ is a countable subset of full measure (if such exists). If one does not exist then $H(\chi)=\infty$. Let $\cC \subset \cB$ be a sub-sigma-algebra. Then the 
%$H(\chi | \cC) = \int ..$

\subsection{Thouvenot's relative isomorphism theorem}

Let $T \in \Aut(X,\cB_X,\mu)$, $U \in \Aut(Z,\cB_Z,\zeta)$ and suppose $\pi:X \to Z$ is a factor map. That is, $\pi$ is measurable, $\pi_*\mu=\zeta$ and $\pi T = U \pi$. Then we say that $(T,X,\cB_X,\mu)$ is {\em Bernoulli} relative to $(U,Z,\cB_Z,\zeta)$ if there is a generator $\chi:X \to K$ for $(T,X,\cB_X,\mu)$ such that the random variables $\{\chi \circ T^i:~i \in \Z\}$ are jointly independent relative to $\pi^{-1}(\cB_Z)$. The dependence on $\pi$ in this definition is left implicit. The next result is in \cite{Th75}.

\begin{thm}[Thouvenot]\label{thm:Thouvenot}
Let $T \in \Aut(X,\cB_X,\mu)$, $S \in \Aut(Y,\cB_Y, \nu)$, $U \in \Aut(Z,\cB_Z,\zeta)$ and $\pi_X:X \to Z$, $\pi_Y:Y \to Z$ be factor maps. Suppose $(T,X,\cB_X,\mu)$ and $(S,Y,\cB_Y,\nu)$ are each Bernoulli relative to $(U,Z,\cB_Z,\zeta)$ and $U$ is ergodic. Suppose also that $h(T| \pi_X^{-1}(\cB_Z)) = h(S| \pi_Y^{-1}(\cB_Z))$. Then there is a measure-space isomorphism $\phi:( X, \cB_X,\mu) \to (Y,\cB_Y,\nu)$ such that $\phi T = S \phi$ and $\pi_X = \pi_Y \phi$.  
\end{thm}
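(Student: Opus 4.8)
The plan is to relativize Ornstein's isomorphism machinery over the common factor $(U,Z,\cB_Z,\zeta)$. Fix the factor maps and set $\cF_X=\pi_X^{-1}(\cB_Z)$ and $\cF_Y=\pi_Y^{-1}(\cB_Z)$. The objects of the theory are \emph{relative processes}: a finite-valued generator $\chi$ for $(T,X,\cB_X,\mu)$ recorded together with the joint distribution of its translates $\{\chi\circ T^i\}$ conditioned on $\cF_X$. On these one defines a \emph{relative $\bar d$-metric} $\bar d_{\cB_Z}$, obtained by taking fiberwise over $Z$ the ordinary $\bar d$-distance between the conditional processes and integrating against $\zeta$, and one has the relative entropy rate $h(T,\chi\mid\cF_X)$ from the Preliminaries. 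Call $\chi$ \emph{relatively finitely determined} if, whenever a sequence of relative processes in systems all factoring onto $(U,Z,\zeta)$ has conditional marginals converging to those of $\chi$ in $L^1(\zeta)$ and relative entropy rates converging to $h(T,\chi\mid\cF_X)$, the sequence converges to $\chi$ in $\bar d_{\cB_Z}$.

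The argument then rests on three ingredients. \emph{(1) Relatively Bernoulli implies relatively finitely determined.} This is the conditional version of the fact that i.i.d.\ processes are very weakly Bernoulli: since the translates of a relatively Bernoulli generator are independent given $\cF_X$, the conditional law of a long future block depends asymptotically negligibly on the conditional past, and Ornstein's coupling argument runs fiberwise, with the conditional ergodic/martingale theorem used to upgrade fiberwise estimates to integrated ones. \emph{(2) Relative copying lemma.} If $\chi$ is a relatively finitely determined relative process of relative entropy rate $r$ and $(S,Y,\cB_Y,\nu)$ is \emph{any} system factoring onto the ergodic $(U,Z,\zeta)$ with $h(S\mid\cF_Y)\ge r$, then for every $\epsilon>0$ there is a finite-valued $\psi$ on $Y$ such that the conditional law of $\{\psi\circ S^i\}$ given $\cF_Y$ is within $\epsilon$ of that of $\chi$ in $\bar d_{\cB_Z}$ and the translates of $\psi$ are jointly independent given $\cF_Y$. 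This is built as in Ornstein: construct a Rokhlin tower for $S$ whose levels are measurable with respect to $\cF_Y$ --- which is possible precisely because $U$ is ergodic (a Rokhlin lemma relative to the factor $Z$) --- and paint on the tower names drawn from the $\cF_Y$-conditional distribution of $\chi$. \emph{(3) Back-and-forth.} Beginning with generators of $(T,X,\mu)$ and $(S,Y,\nu)$ and alternating applications of the relative copying lemma in the two systems, one produces $\bar d_{\cB_Z}$-Cauchy sequences of relative partitions whose limits generate the two systems and are interchanged by a single isomorphism $\phi$; since each copying step is performed equivariantly over $Z$, the limit satisfies $\pi_X=\pi_Y\phi$ automatically, and $\phi T=S\phi$ as in the absolute theorem. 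The hypothesis $h(T\mid\cF_X)=h(S\mid\cF_Y)$ is exactly what makes the relative entropies match at every copying step.

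The crux is ingredient (2): one must carry out all of Ornstein's combinatorial constructions --- Rokhlin towers, $\bar d$-close name distributions, and the gadget surgeries --- \emph{fiberwise over $Z$} while keeping everything jointly measurable in the point of $Z$ and controlling the error in integrated rather than pointwise $\bar d$. Managing the exceptional fibers requires ergodicity of $U$ together with the relative Shannon--McMillan--Breiman theorem. Once the relative copying lemma is established with the right measurability and equivariance, ingredients (1) and (3) are routine conditionalizations of Ornstein's original proof; this is the content of \cite{Th75}.
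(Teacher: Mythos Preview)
The paper does not prove this theorem at all: it is stated as a black box and attributed to Thouvenot's original paper \cite{Th75}. Your sketch --- relativizing Ornstein's finitely-determined/copying-lemma machinery over the common ergodic factor $(U,Z,\zeta)$, with the relative $\bar d$-metric and a Rokhlin lemma measurable with respect to the factor --- is exactly the strategy of \cite{Th75}, so your outline is consistent with (and in fact more detailed than) what the paper provides.
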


\subsection{Measured equivalence relations}

A {\em measurable equivalence relation} on a Borel space $X$ is an equivalence relation $E$ on $X$ such that  $E$ is  a Borel subset of  $X \times X$. If $(x,y)\in E$ we write $xEy$. We say that $E$ is {\em countable} if every $E$-equivalence class is countable. An {\em inner automorphism} of $E$ is a Borel isomorphism $\phi:X \to X$ such that the graph of $\phi$ is contained in $E$. The group of all inner automorphisms is called the {\em full group} and denoted by $[E]$. 

Now assume $\mu$ is a probability measure on $X$ so that $(X,\mu)$ is a standard probability space. If $\phi_*\mu=\mu$ for every $\phi \in [E]$ then we say $(X,\mu,E)$ is a {\em probability measure preserving} (p.m.p.) equivalence relation. We also say that $\mu$ is $E$-invariant. For example, if $G \cc (X,\mu)$ is a probability measure-preserving action of a countable group $G$ and $E:=\{(x,gx):~x\in X, g\in G\}$ then $\mu$ is $E$-invariant. We say $(X,\mu,E)$ is {\em aperiodic} if for a.e.  $x\in X$ the $E$-equivalence class of $x$ is infinite. We say $(X,\mu,E)$ is {\em ergodic} if for every measurable set $A \subset X$ which is a union of $E$-classes, either $\mu(A)=0$ or $\mu(A)=1$.

\begin{thm}\label{thm:T}
Let $(X,\mu,E)$ be an aperiodic ergodic p.m.p. equivalence relation. Then there exists an ergodic automorphism $T \in [E]$ such that for a.e. $x\in X$, $\{T^i x:~ i\in \Z\}$ is infinite.
\end{thm}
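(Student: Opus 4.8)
The plan is to produce $T$ by realizing an odometer‑type transformation inside $[E]$, obtained as a limit, in the uniform metric $d(S,S')=\mu(\{x:Sx\neq S'x\})$ on $[E]$, of finite cyclic ``towers''. Two preliminary reductions clear the ground. First, $\mu$ is non‑atomic: if $x$ were an atom then every point of $[x]_E$ would be an atom of the same mass (the transposition of $x$ with an $E$‑equivalent point is an inner automorphism, hence measure preserving), and $[x]_E$ is infinite, contradicting $\mu(X)<\infty$. Second, it suffices to find an \emph{ergodic} $T\in[E]$: an ergodic measure‑preserving transformation of a non‑atomic probability space is automatically aperiodic, since $\{x:T^nx=x\}$ is $T$‑invariant, so if $n$ is least with $T^n=\mathrm{id}$ a.e.\ then a.e.\ $T$‑orbit has size exactly $n$; picking a Borel transversal $F$ for the orbits and splitting it into positive‑measure pieces $F_0,F_1$ (possible since $\mu$ is non‑atomic), $\bigcup_{i\in\Z}T^iF_0$ is $T$‑invariant of measure $n\mu(F_0)\in(0,1)$ unless $n=1$, and $n=1$ would make every set invariant.

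The one ingredient about equivalence relations that I would isolate is the following standard lemma: if $(X,\mu,E)$ is ergodic, aperiodic and p.m.p.\ and $A,B$ are measurable with $\mu(A)=\mu(B)$, then there is a Borel bijection $\psi\colon A\to B$ with graph contained in $E$ (automatically measure preserving). One proves it by taking, via exhaustion, a partial inner isomorphism $A'\to B'$ (with $A'\subseteq A$, $B'\subseteq B$) whose domain has maximal measure; if $\mu(A')<\mu(A)$ then $\mu(A\setminus A')>0$ and $\mu(B\setminus B')>0$, ergodicity forces the $E$‑saturation of $A\setminus A'$ to be conull and hence to meet $B\setminus B'$ in positive measure, and a Feldman--Moore/Lusin--Novikov selection extends the partial isomorphism, a contradiction. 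I would simply cite this.

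Now the construction. Fix a sequence $\{D_m\}_{m\ge1}$ dense in the measure algebra of $(X,\mu)$. Inductively I build $T_n\in[E]$ and a partition $\cP_n=\{L^n_0,\dots,L^n_{h_n-1}\}$ of $X$ into $h_n$ sets of equal measure $1/h_n$ such that $T_n$ cyclically permutes the levels, $T_nL^n_j=L^n_{j+1\bmod h_n}$. To pass from stage $n$ to stage $n+1$, I cut the base $B_n:=L^n_0$ into $k_n$ pieces of equal measure $C_0,\dots,C_{k_n-1}$ — choosing them, using non‑atomicity and a large enough $k_n$, so that the partition obtained by transporting $\{C_i\}$ up the tower by $T_n$ refines $\cP_n$ and approximates $D_{n+1}$ to within $2^{-(n+1)}$ (it is enough that $\{C_i\}$ approximately refine the finite partition of $B_n$ recording, for each point, at which levels its $T_n$‑orbit meets $D_{n+1}$). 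I then restack: the new tower has base $C_0$, height $h_{n+1}=k_nh_n$, levels $\{T_n^jC_i:0\le j<h_n,\ 0\le i<k_n\}$, and $T_{n+1}$ equals $T_n$ on every level except the old top level $L^n_{h_n-1}=\bigsqcup_iT_n^{h_n-1}C_i$, on which it cyclically shifts the pieces, $T_{n+1}(T_n^{h_n-1}C_i)=C_{i+1\bmod k_n}$; these $k_n$ ``jump'' maps between equal‑measure sets are realized inside $E$ by the lemma. Thus $T_{n+1}\in[E]$, the towers stay uniform, $\cP_{n+1}$ refines $\cP_n$, and $T_{n+1}$ agrees with $T_n$ off the set $L^n_{h_n-1}$ of measure $1/h_n$. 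Taking each $k_n\ge2$, we have $h_n\to\infty$ and $\sum_n1/h_n<\infty$, so $(T_n)$ is $d$‑Cauchy; it converges to some $T\in\Aut(X,\mu)$, and $T\in[E]$ because $[E]$ is $d$‑closed.

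It remains to see $T$ is ergodic. Since each later modification happens inside a sub‑level of the current top level, every $T_m$ with $m\ge n$ agrees with $T_n$ off $L^n_{h_n-1}$, hence $TL^n_j=L^n_{j+1}$ for all $j<h_n-1$. Therefore, for any $T$‑invariant set $A$, $\mu(A\cap L^n_{j+1})=\mu(T(A\cap L^n_j))=\mu(A\cap L^n_j)$ for $j<h_n-1$, so $\mu(A\cap L^n_j)=\mu(A)/h_n$ for every $j$; as the levels are uniform this says $\E[\mathbf 1_A\mid\cP_n]\equiv\mu(A)$. The sets $D_m$ being dense and captured by $\bigcup_n\sigma(\cP_n)$, we get $\bigvee_n\sigma(\cP_n)=\cB_X$, so martingale convergence gives $\mathbf 1_A=\lim_n\E[\mathbf 1_A\mid\cP_n]=\mu(A)$ a.e., whence $\mu(A)\in\{0,1\}$. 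So $T\in[E]$ is ergodic, and by the second reduction it has a.e.\ infinite orbits. The hard part is the bookkeeping inside the inductive step — keeping the tower maps $d$‑Cauchy, keeping the towers uniform, making the level partitions generate $\cB_X$, and keeping everything inside $[E]$, the last being exactly where aperiodicity and ergodicity of $E$ are used, via the lemma. (Alternatively one could bypass the explicit construction by quoting that the ergodic elements form a dense $G_\delta$ in $([E],d)$ and again invoke the second reduction.)
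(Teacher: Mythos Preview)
Your argument is correct and far more detailed than what the paper itself supplies: the paper simply records that this is a classical folk theorem and cites Mercer (Corollary~3.3 of \cite{Me93}) and Kechris (Theorem~3.5 of \cite{Ke10}) without further proof. Your explicit cutting-and-stacking construction of an odometer inside $[E]$ is essentially the Dye-theorem machinery that underlies those references, and the alternative you mention at the end (that the ergodic elements form a dense $G_\delta$ in $([E],d)$) is exactly the route taken in \cite{Ke10}. So there is no genuine divergence of method, only of explicitness.

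One small bookkeeping point worth tightening: as written, at stage $n+1$ you approximate only $D_{n+1}$ to within $2^{-(n+1)}$, and a single such approximation does not by itself force $D_{n+1}\in\bigvee_n\sigma(\cP_n)$. The standard fix is either to relist the dense sequence so that each $D_m$ appears infinitely often, or at stage $n$ to approximate all of $D_1,\dots,D_n$ to within $2^{-n}$; either way $\bigvee_n\sigma(\cP_n)=\cB_X$ follows and the martingale argument goes through unchanged. This is routine and does not affect the substance of your proof.
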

\begin{proof}
This is a classical ``folk'' theorem. It is a special case of Corollary 3.3 of \cite{Me93}. It is also proven in \cite{Ke10}, Theorem 3.5.

% Indeed, Mercer proves more: $\mu$ is only required to be quasi-invariant and the statement holds for every $x\in X$ rather than a.e. $x\in X$.
\end{proof}

\section{Almost Ornstein groups}

\begin{lem}\label{lem:key}
Let $G$ be a countably infinite group and let $(K,\kappa), (L,\lambda), (M,\mu)$ be standard probability spaces with $H(K,\kappa)=H(L,\lambda)$.  In addition, suppose there exist measurable maps $\alpha:K \to M$ and $\beta:L \to M$ such that $\alpha_*\kappa=\beta_*\lambda=\mu$ and $(M,\mu)$ is nontrivial (i.e., $H(M,\mu)>0$). Then $G \cc (K^G,\kappa^G)$ is isomorphic to $G \cc (L^G,\lambda^G)$.
\end{lem}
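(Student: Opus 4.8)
The plan is to build a genuinely $G$-equivariant measure isomorphism $K^G\to L^G$ by descending to a well-chosen $\Z$-subsystem over the common factor $M^G$, applying Thouvenot's relative isomorphism theorem there, and co-inducing the result back to $G$ in the spirit of Stepin. Some reductions first. A factor cannot increase Shannon entropy, so $H(M,\mu)\le H(K,\kappa)$; if $H(M,\mu)=\infty$ we replace $(M,\mu)$ by a nontrivial factor of finite positive entropy and compose $\alpha,\beta$ with the projection, so we may assume $H(M,\mu)<\infty$. Fix disintegrations $\kappa=\int_M\kappa_m\,d\mu$ and $\lambda=\int_M\lambda_m\,d\mu$ over $\alpha$ and $\beta$, and set $e:=\int_M H(\kappa_m)\,d\mu(m)$; the chain rule for entropy gives $e=H(K,\kappa)-H(M,\mu)=H(L,\lambda)-H(M,\mu)=\int_M H(\lambda_m)\,d\mu(m)$, a common value in $[0,\infty]$. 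Since $H(M,\mu)>0$, the Bernoulli shift $G\cc(M^G,\mu^G)$ is free and ergodic with aperiodic orbit relation $E_M$, so Theorem~\ref{thm:T} provides an ergodic $T\in[E_M]$ with a.e.\ orbit infinite. Write $Tz=c(z)z$ for the resulting cocycle $c\colon M^G\to G$, let $\cR_T\subseteq E_M$ be the orbit relation of $T$, and let $\tilde T\in[E_K]$, $\hat T\in[E_L]$ be the lifts $\tilde Tx=c(\alpha^Gx)x$, $\hat Ty=c(\beta^Gy)y$ of $T$ along the (class-bijective) factor maps $\alpha^G\colon K^G\to M^G$ and $\beta^G\colon L^G\to M^G$.

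Next, a structural model. Because the actions are free, relabelling the coordinates of $x\in K^G$ by the points of the $E_M$-class of $z:=\alpha^Gx$, via $g\mapsto g^{-1}z$, identifies $K^G$, as a $G$-system over $M^G$, with
$$K^G\;\cong\;\big\{(z,\phi)\,:\,z\in M^G,\ \phi\in\textstyle\prod_{w\in[z]_{E_M}}\alpha^{-1}(w(e))\big\},$$
on which $g\cdot(z,\phi)=(gz,\phi)$, the factor is $(z,\phi)\mapsto z$, and $\phi$ has conditional law $\bigotimes_{w\in[z]_{E_M}}\kappa_{w(e)}$ given $z$; thus the ``labelling'' $\phi$ of the orbit is fixed by $G$ and only the base point moves. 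In these coordinates $\tilde T(z,\phi)=(Tz,\phi)$. Writing $O_z$ for the $\cR_T$-class of $z$, let $Y_K$ be the factor of the $\Z$-system $(K^G,\kappa^G,\tilde T)$ that forgets the coordinates of $\phi$ outside the $T$-orbit, i.e.\ $Y_K=\{(z,\psi):\psi\in\prod_{w\in O_z}\alpha^{-1}(w(e))\}$, with automorphism (again denoted $T$) $(z,\psi)\mapsto(Tz,\psi)$, factor $(z,\psi)\mapsto z$ onto $(M^G,\mu^G,T)$, and $\psi$ conditionally $\bigotimes_{w\in O_z}\kappa_{w(e)}$; define $Y_L$ analogously from $(L^G,\lambda^G,\hat T)$. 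The key claim is that $Y_K$ and $Y_L$ are each Bernoulli relative to $(M^G,\mu^G,T)$ with relative entropy rate $e$. For $Y_K$, take as generator $\chi(z,\psi)=(z,\psi_z)$, where $\psi_z\in\alpha^{-1}(z(e))$ is the coordinate of $\psi$ indexed by $z\in O_z$; since $\{T^iz:i\in\Z\}=O_z$ and $\chi\circ T^i(z,\psi)=(T^iz,\psi_{T^iz})$, the $T$-translates of $\chi$ recover $z$ together with every coordinate of $\psi$, so $\chi$ generates, and conditionally on $\sigma(z)$ these translates are independent because distinct coordinates of $\psi$ are; finally $\lim_n(2n+1)^{-1}H\!\big(\bigvee_{i=-n}^n\chi\circ T^i\mid\sigma(z)\big)=\int_M H(\kappa_m)\,d\mu=e$ (with finite-range truncations of $\chi$ handling $e=\infty$). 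The same computation with $\lambda$ gives relative entropy rate $e$ for $Y_L$, using $\int H(\kappa_m)\,d\mu=\int H(\lambda_m)\,d\mu$.

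Now co-induct. By Thouvenot's Theorem~\ref{thm:Thouvenot} (with $U=T$ ergodic) there is a measure isomorphism $\Theta\colon Y_K\to Y_L$ with $\Theta T=T\Theta$ that is over $M^G$. Writing $\Theta(z,\psi)=(z,\Theta_z\psi)$, commutation with $T$ forces $\Theta_{Tz}=\Theta_z$, so $z\mapsto\Theta_z$ is $\cR_T$-invariant and depends only on $O=O_z$; call it $\Theta_O\colon\prod_{w\in O}\alpha^{-1}(w(e))\to\prod_{w\in O}\beta^{-1}(w(e))$, a measure isomorphism depending measurably on $z$. Define $\Psi\colon K^G\to L^G$ in the model above by partitioning $[z]_{E_M}$ into its $\cR_T$-classes $O$, decomposing $\phi=(\phi^{O})_{O}$ accordingly, and setting $\Psi(z,\phi)=(z,(\Theta_O\phi^{O})_O)$. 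As each $\Theta_O$ is a measure isomorphism of the corresponding product factors, $\Psi$ carries $\kappa^G$ to $\lambda^G$ and is invertible; and since the family of $\cR_T$-classes inside a given $E_M$-class, the maps $\Theta_O$, and the pieces $\phi^O$ are all intrinsic to the $E_M$-class and not to the base point, replacing $z$ by $gz$ leaves this fibrewise prescription unchanged, i.e.\ $\Psi(g\cdot(z,\phi))=g\cdot\Psi(z,\phi)$. Hence $\Psi$ is a $G$-equivariant isomorphism $G\cc(K^G,\kappa^G)\to G\cc(L^G,\lambda^G)$. (Conceptually, $K^G$ and $L^G$ are the co-inductions of $Y_K$ and $Y_L$ from $\cR_T$ to $E_M$, a functorial operation carrying the Thouvenot isomorphism to a $G$-isomorphism; this is the ``co-induction similar to Stepin's''.)

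The crux is the second paragraph: the ``labelled base point'' identification of $K^G$, and above all the verification that the $\Z$-factor $Y_K$ is Bernoulli relative to $(M^G,T)$ with relative entropy rate exactly $e$. The subtle point there is that the generating function must have $T$-translates that are both relatively independent over the base and jointly recover the whole fibre — which is why $\psi$ is made to live over the $\cR_T$-class $O_z$ (exhausted by $\{T^iz\}$) rather than over the much larger $E_M$-class — together with the conditional entropy computation, including its $e=\infty$ case. Once $Y_K$ and $Y_L$ are in hand, Thouvenot's theorem and the co-induction bookkeeping are essentially formal.
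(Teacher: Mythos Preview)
Your proof is correct and follows essentially the same strategy as the paper's: pick an ergodic aperiodic $T$ in the full group of the orbit relation on $(M^G,\mu^G)$, lift it to $K^G$ and $L^G$, verify relative Bernoullicity with equal relative entropy over the $M^G$-factor, apply Thouvenot, and then co-induce the resulting $\Z$-isomorphism to a $G$-isomorphism by propagating it across the $\cR_T$-classes inside each $G$-orbit. The packaging differs slightly --- you use an explicit ``orbit-labelling'' model $(z,\phi)$ with the full Borel $\sigma$-algebra of $M^G$ as base, while the paper works with the sub-$\sigma$-algebras $\cB_K,\cB_M$ generated along the $T$-orbit and writes the final map via the Stepin formula $\Phi(x)(g)=\phi(g^{-1}x)(e)$ --- but the arguments are the same, and your explicit reduction to $H(M,\mu)<\infty$ is a welcome bit of care.
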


\begin{proof}
 Let $\alpha^G: K^G \to M^G$  be the product map: $\alpha^G(x)(g):=\alpha(x(g))$. Define $\beta^G:L^G \to M^G$ similarly. Note $\alpha^G_*\kappa^G= \beta^G_*\lambda^G=\mu^G$. 

%Let $\mu = \alpha_*\kappa$ and $\nu=\beta_*\lambda$. By hypothesis $\mu=\nu$. Without loss of generality, we may assume that $\kappa(k_0)<1$ so that $H(\mu)>0$. 

By Theorem \ref{thm:T} there exists an ergodic $U \in \Aut(M^G,\mu^G)$ such that (i) for a.e. $x\in M^G$, the orbit $\{U^ix:~ i \in \Z\}$ is infinite, and (ii)  for a.e. $x\in M^G$ there is a $g\in G$ such that $Ux=gx$.

%$\{ (x,Ux):~ x\in M^G\} \subset E$ where $E$ is the orbit equivalence relation on $M^G$: $xEy \Leftrightarrow \exists g\in G, gx=y$. 

Define $T:K^G \to K^G$ by $T(x)=gx$ where $g\in G$ is such that $U(\alpha^G(x))=g\alpha^G(x)$. Similarly, let $S:L^G \to L^G$ be defined by $S(y)=gy$ where $g\in G$ is such that $U(\beta^G(y))=g\beta^G(y)$. Note that for a.e. $(x,y)\in K^G \times L^G$ with $\alpha^G(x)=\beta^G(y)$, there is a $g\in G$ such that $Tx = gx$ and $Sy=gy$.

Let $\chi_M: M^G \to M$ be the projection map $\chi_M(x)=x(e)$. Let $\cB_M$ be the smallest $U$-invariant sigma-algebra on $M^G$ for which $\chi_M$ is measurable. Define $\chi_K:K^G \to K, \chi_L: L^G \to L, \cB_K, \cB_L$ similarly. 

Observe that $(T,K^G, \cB_K, \kappa^G)$ and $(S,L^G,\cB_L, \lambda^G)$ are Bernoulli relative to $(U,M^G, \cB_M, \mu^G)$. To see this, note, for example, that $\chi_K$ is a generator for $(T,K^G, \cB_K, \kappa^G)$ and $\{\chi_K \circ T^i : i \in \Z\}$ are jointly independent relative to $(\alpha^G)^{-1}(\cB_M)$. So %for any finite set $I \subset \Z$, 
%$$H( \bigvee \chi_K \circ T^i | (\alpha^G)^{-1}(\cB_M)) = |I| \left( H(K,\kappa) - H(M,\mu) \right).$$
%Therefore,
$$h(T| (\alpha^G)^{-1}(\cB_M) ) = H(\chi_K |~ (\alpha^G)^{-1}(\cB_M) ) = H(K,\kappa) - H(M,\mu).$$
Similarly, $H(L,\lambda) - H(M,\mu) = h(S| (\beta^G)^{-1}(\cB_M) )$. By Theorem \ref{thm:Thouvenot}, there is an isomorphism $\phi: (K^G, \cB_K, \kappa^G) \to (L^G,\cB_L, \lambda^G)$ such that $\phi T = S \phi$ and $\alpha^G = \beta^G \phi$. 

%Let $\cF$ be the smallest $G$-invariant sigma-algebra on $K^G$ with respect to which $\alpha^G:K^G \to \{0,1\}^G$ ($\alpha^G(x)(g):=\alpha(x(g))$) is measurable. Let $E$ be the orbit equivalence relation on $K^G$: $xEy \Leftrightarrow \exists g\in G, gx=y$. By Corollary \ref{cor:T} there exists an $\cF$-measurable isomorphism $T:(K^G,\kappa^G) \to (K^G,\kappa^G)$ such that (i) for a.e. $x\in X$, the orbit $\{T^ix:~ i \in \Z\}$ is infinite, and (ii)  $\{ (x,Tx):~ x\in X\} \subset E$.

%By the previous lemma, there exists a shift-equivariant isomorphism $\phi: (K^\Z, \kappa^\Z) \to (L^\Z,\lambda^\Z)$ such that $\beta^\Z \phi = \alpha^\Z$ (and $\alpha^\Z \phi^{-1} = \beta^\Z$).

%there is an isomorphism $\rho:(K_0,\kappa_0)\to (L_0,\lambda_0)$ so that for a.e. $x\in K^\Z$, if $x(0)\in K_0$ then $\phi(x)(0)=\rho(x(0))$. 

%Define $\Upsilon: K^G \to K^\Z$ by $\Upsilon(x)(n) = (T^nx)(e)$. 
Define $\Phi:K^G \to L^G$ by $\Phi(x)(g)= \phi(g^{-1}x)(e)$ for $g\in G$, $x\in K^G$. Observe that $\Phi$ is measurable with respect to the Borel sigma-algebras of $K^G$ and $L^G$ (which are, in general, larger than $\cB_K$ and $\cB_L$). We claim this is the required isomorphism. It is clearly shift-equivariant.

To see that $\Phi$ is invertible, define $\Psi: L^G \to K^G$ by $\Psi(y)(g) = \phi^{-1}(g^{-1}y)(e)$. Because both $\Phi$ and $\Psi$ are shift-equivariant and $\Phi \Psi(y)(e)=y(e)$, $\Psi\Phi(x)(e)=x(e)$, it follows that $\Psi$ is the inverse of $\Phi$. It is easy to check that $\beta^G \Phi = \alpha^G$ and $\Phi T = S \Phi$.

%Define $S:(L^G,\lambda^G) \to (L^G,\lambda^G)$ by $S(y)=gy$ where $g\in G$ is the element satisfying $T(x)=gx$ for any $x$ with $\alpha^G(x)=\beta^G(y)$. 

%Let $\beta:L \to \{0,1\}$ be the map $\beta(l)=0$ if $l\in L_0$ and $\beta(l)=1$ otherwise.

To finish, we must show that $\Phi_*\kappa^G = \lambda^G$. For $z \in M^G$, let $X_z=\{x\in K^G:~ \alpha^G(x)=z\}$ and $Y_z=\{y\in L^G:~\beta^G(y)=z\}$. Because $\beta^G \Phi = \alpha^G$, it follows that $\Phi$ maps $X_z$ to $Y_z$. 

%Without loss of generality, we assume that for every $m\in M$, $\mu(\{m\})>0$. Let $\kappa_m$ be the measure on $K_m:=\alpha^{-1}(m)$ defined by $\kappa_m(B) = \frac{\kappa(B)}{\kappa(K_m)}$ (for any $B \subset \alpha^{-1}(m)$). Define $L_m$ and $\lambda_m$ similarly. 

%For $z \in M^G$, let $\zeta_z:= \prod_{m\in M} \kappa_m^{z^{-1}(m)}$ be a probability measure on $X_z$. Note that
%$$\kappa^G  = \int \zeta_z~ d\mu^G(z).$$

%Similarly, let $\nu_z:= \prod_{m\in M} \lambda_m^{z^{-1}(m)}$ be a probability measure on $Y_z$. Note that
%$$\lambda^G  = \int \nu_z~ d \mu^G(z).$$

For $z\in M^G$, let $\zeta_z$ be the fiber measure of $\kappa^G$ over $z$. This family of measures is determined (up to measure zero sets) by the property that $\zeta_z$ is supported on $X_z$ and $\kappa^G = \int \zeta_z~d\mu^G(z)$. Similarly, let $\nu_z$ be the fiber measure of $\lambda^G$ over $z$.

Because $\alpha^G_*\kappa^G = \beta^G_*\lambda^G = \mu^G$ and $\Phi$ maps $X_z$ to $Y_z$, in order to show that $\Phi_*\kappa^G = \lambda^G$, it suffices to show that $\Phi$ restricts to an isomorphism from $(X_z,\zeta_z)$ to $(Y_z,\nu_z)$ for a.e. $z$. 

%Fix a typical $z \in M^G$. Without loss of generality, the stabilizer of $z$ is trivial. Let $g_0,g_1,\ldots G$ be elements such that if $O_i=\{U^j g_iz:~ j \in \Z\}$ then $O_i \cap O_j =\emptyset$ and $\cup O_i = Gz$. 

%Observe that $\Phi$ is the p

For $g\in G$ and $n\in \Z$ let $\tau_z^n(g) \in G$ be the element satisfying $U^n(g^{-1}z)(e)=z(\tau_z^n(g))$. This is well-defined for a.e. $z\in M^G$. If $x\in X_z$ then $T^n(g^{-1}x)(e)=x(\tau_z^n(g))$ by definition.

Fix $z\in M^G$. Let $e=g_0,g_1,\ldots \in G$ be such that if $O_i = \{\tau^n_z(g_i):~ n \in \Z\}$ then $O_i \cap O_j =\emptyset $ whenever $i\ne j$ and $\cup_{i=0}^\infty O_i=G$. So $K^G = \prod_{i=0}^\infty K^{O_i}$.

Let $\cB_{K,z,0}$ be the restriction of $\cB_K$ to $X_z$. This is the $\sigma$-algebra of $X_z$ generated by $\{\chi_K \circ T^j :~ j \in \Z\}$. More generally, let $\cB_{K,z,i}$ be the sigma-algebra on $X_z$ generated by $\{\chi_K \circ T^j g_i^{-1}:~ j \in \Z\}$. Define $\sigma$-algebras $\cB_{L,z,i}$ on $Y_z$ similarly. 

Because the $O_i$'s are pairwise disjoint, the sigma-algebras $\cB_{K,z,i}$ are independent. Because $\cup O_i = G$, these sigma-algebras generate the Borel sigma-algebra of $K^G$ restricted to $X_z$. Similarly statements apply to $L$ in place of $K$. Therefore, it suffices to show that $\Phi$ restricted to $X_z$ determines an isomorphism from $(X_z,\cB_{K,z,i}, \zeta_z)$ to $(Y_z, \cB_{L,z,i}, \nu_z)$ for all $i$. 

By definition, $\phi$ (and therefore $\Phi$) restricted to $X_z$ is an isomorphism from $(X_z, \cB_{K,z,0}, \zeta_z)$ to $(Y_z, \cB_{L,z,0}, \nu_z)$. Note that $(X_{g_i^{-1}z}, \cB_{K,g^{-1}_iz,0},\zeta_{g_i^{-1}z}) = g^{-1}_i (X_z, \cB_{K,z,i}, \zeta_z)$. Because $\Phi$ is shift-equivariant, this implies  $\Phi$ restricted to $X_z$ determines an isomorphism from $(X_z,\cB_{K,z,i}, \zeta_z)$ to $(Y_z, \cB_{L,z,i}, \nu_z)$ for every $i$.

\end{proof}

%A {\em two-atom} space is a probability space $(X,\mu)$ that is isomorphic to $(\{0,1\}, m)$ for some measure $m$.% If $p_1,\ldots,p_n$ are non-negative real numbers then let $H(p_1,\ldots,p_n):=-\sum_{i=1}^n p_i \log(p_i)$ where $0\log(0):=0$ by convention.

%We can now prove Theorem \ref{thm:main}:

%\begin{lem}\label{lem:non2atom}
%Let $G$ be a countably infinite group. Let $(K,\kappa), (L,\lambda)$ be standard probability spaces with $H(K,\kappa)=H(L,\lambda)$. In addition, suppose both $(K,\kappa)$ and $(L,\lambda)$ are not two-atom spaces. Then the Bernoulli shifts $G \cc (K^G,\kappa^G)$ and $G \cc  (L^G,\lambda^G)$ are isomorphic.
%\end{lem}

\begin{proof}[Proof of Theorem \ref{thm:main}]
Let $G$ be a countably infinite group. Let $(K,\kappa), (L,\lambda)$ be standard probability spaces with $H(K,\kappa)=H(L,\lambda)$. In addition, suppose both $(K,\kappa)$ and $(L,\lambda)$ are not two-atom spaces. We must show that the Bernoulli shifts $G \cc (K^G,\kappa^G)$ and $G \cc  (L^G,\lambda^G)$ are isomorphic.

For $p\in [0,1]$ let $m_p$ be the probability measure on $\{0,1\}$ given $m_p(\{0\})=p$, $m_p(\{1\})=1-p$. If, say $(K,\kappa)$ is not purely atomic then there is some $p_0>0$ such that for all $p<p_0$, $(K,\kappa)$ maps onto $(\{0,1\},m_p)$. In this case, $H(K,\kappa)=\infty=H(L,\lambda)$ which implies that for some $0<p<p_0$, $(L,\lambda)$ also maps onto $(\{0,1\},m_p)$. So the previous lemma implies the result.

Let us now assume that $(K,\kappa)$ and $(L,\lambda)$ are purely atomic. Borrowing an idea from \cite{KS79} (Lemma 2), we observe that if $t>0$ is the largest number such that $\kappa(\{k\})=t$ for some $k\in K$ then there is a number $s>0$ such that $\lambda(\{l\})=s$ for some $l\in L$ and $t+s <1$. Then there is a countable (or finite) set $N$ with a probability measure $\nu$ so that for some $n_0,n_1 \in N$, $\nu(\{n_0\})=t$, $\nu(\{n_1\})=s$ and $H(N,\nu) = H(K,\kappa)=H(L,\lambda)$. In particular, both $(K,\kappa)$ and $(N,\nu)$ map onto $(\{0,1\}, m_t)$. Also $(L,\lambda)$ and $(N,\nu)$ map onto $(\{0,1\}, m_s)$. So the previous lemma implies $G \cc (K^G,\kappa^G)$ is isomorphic to $G \cc (N^G, \nu^G)$ which is isomorphic to $G \cc (L^G, \lambda^G)$. 

%Because $(K,\kappa)$ and $(L,\lambda)$ are not two-atom spaces, there exist nontrivial probability spaces $(M,\mu), (N,\nu)$ and nontrivial factor maps $\alpha:K \to M$, $\beta:L \to N$ such that $\alpha_*\kappa=\mu$, $\beta_*\lambda = \nu$. By nontrivial, we mean that $H(K,\kappa)> H(M,\mu)>0$ and $H(L,\lambda)>H(N,\nu)>0$. We also require $M$ and $N$ to be finite sets.

%Let $(Y,\xi), (Z,\zeta)$ be probability spaces such that $H(K,\kappa) = H(M,\mu) + H(Y,\xi)=H(L,\lambda) = H(N,\nu) + H(Z,\zeta)$. Note that $H(Y,\xi)>0$ and $H(Z,\zeta)>0$. Hence it is possible to choose $(Y,\xi)$ and $(Z,\zeta)$ so that they have a common nontrivial factor. That is, we can assume there is a probability space $(J,\eta)$ and maps $\phi:Y \to J$, $\psi:Z \to J$ such that $\phi_*\xi = \psi_*\zeta = \eta$ and $0< H(J,\eta)$. 

%By the previous lemma, $G \cc (K^G,\kappa^G)$ is isomorphic to $G \cc ( (M \times Y)^G, (\mu \times \xi)^G)$ which is isomorphic to $G \cc ((N \times Z)^G, (\nu \times \zeta)^G)$ which is isomorphic to $G \cc (L^G, \lambda^G)$. 
\end{proof}

%\section{Semi-direct products}

%\begin{lem}
%If $G = N\rtimes H$ is a countably infinite group and both $N$ and $H$ are non-trivial, then $G$ is Ornstein.
%\end{lem}

%\begin{proof}

%By Lemma \ref{lem:key} it suffices to show that if $|K|=2$, $\kappa$ is a fully supported probability measure on $K$ then there is a non-two-atom space $(L,\lambda)$ with $H(K,\kappa)=H(L,\lambda)$ such that $G \cc (K^G,\kappa^G)$ is isomorphic to $G \cc (L^G,\lambda^G)$.

%{\bf Case 1}. Let us assume $N$ is infinite.

%By Stepin's Theorem, if $H$ contains an infinite cyclic subgroup, then $G$ is Ornstein. So we may assume that $H$ contains an nontrivial element $g_0$ of finite order. By Stepin's Theorem again, we may assume without loss of generality, that $H$ is generated by $g_0$. By Lemma \ref{lem:key}, $N \cc (K^H,\kappa^H)^N$ is isomorphic to $N \cc (L^H, \lambda^H)^N$. Let $\phi: (K^H)^N \to (L^H)^N$ implement the isomorphism.

%We write elements $g\in G$ as pairs $g=(n,h)$ with $n\in N, h \in H$.

%Let $\Upsilon:K^G \to (K^H)^N$ be the map $\Upsilon(x)(n)(h)=x(n,h)$. Similarly, let $\Omega:L^G \to (L^H)^N$ be the map $\Omega(y)(n)(h)=y(n,h)$. 

%Define $\Phi:K^G \to L^G$ by $\Phi(x)(n,h) = \phi(\Upsilon(x))(n)(h)$. 

% \end{proof}

\section{Measurable subgroups}
In order to prove Theorem \ref{thm:2atom} we extend the results of the previous section to so-called {\em measurable subgroups} of a group $G$. To be precise, let $G$ be a countably infinite group and $2^G$ be the set of all subsets of $G$. $G$ acts on $2^G$ by $(g,F)\mapsto gF=\{gf:~f\in F\}$ for $g\in G$ and $F\in 2^G$. Let $\cR$ be the orbit-equivalence relation on $2^G$: $F\cR H \Leftrightarrow \exists g\in G$ such that $F=gH$. Let $2_e^G $ be the set of all $F \in 2^G$ such that $e\in F$. Let $\cR_e = \cR \cap 2_e^G \times 2_e^G$ be the restriction of $\cR$ to $2_e^G$. A {\em measurable subgroup} of $G$ is an $\cR_e$-invariant probability measure $\eta$ on $2_e^G$. 

To justify the definition, note that a subgroup is any subset $H \subset G$ which contains the identity and satisfies $h^{-1}H = H$ for all $h\in H$. A measurable subgroup $\eta$ is the law of a random subset $H$ which contains the identity and has the property that if $H \mapsto h_H \in H$ is a Borel assignment then $h_H^{-1}H$ has the same law as $H$ (as long as $H \mapsto h_H^{-1}H$ is Borel-invertible). For example, if $H$ is a subgroup then $\delta_H$, the Dirac probability measure concentrated at $\{H\}$, is a measurable subgroup.

Let $(K,\kappa)$ be a standard probability space. Let $2^G \otimes K$ be the set of all maps $x:\dom(x) \to K$ with $\dom(x) \subset G$. $G$ acts on this space by $gx(f)=x(g^{-1}f)$ (for $x\in 2^G \otimes K$, $g\in G, f\in g\dom(x)$). Note that $\dom(gx)=g\dom(x)$. Let $\cR_K$ be the orbit-equivalence relation on $2^G \otimes K$: so $x\cR_Ky \Leftrightarrow \exists g\in G$ such that $gx=y$. 

Let $2_e^G \otimes K$ be the set of all $x\in 2^G \otimes K$ with $e\in \dom(x)$. Let $\cR_{e,K}$ be $\cR_K$ restricted to $2_e^G\otimes K$. 

If $\eta$ is a measurable subgroup of $G$ then let $\eta \otimes \kappa$ be the probability measure on $2_e^G \otimes K$ defined by
$$\eta \otimes \kappa = \int \delta_H \times \kappa^H~d\eta(H)$$
where $\delta_H$ is the Dirac measure concentrated on $\{H\}$ and $\kappa^H$ is the product measure on $K^H$. This measure is $\cR_{e,K}$-invariant and projects to $\eta$. It is the {\em Bernoulli shift} over $\eta$ with base space $(K,\kappa)$.

Let $(L,\lambda)$ be another standard probability space. We say the two Bernoulli shifts $(2_e^G \otimes K, \eta \otimes \kappa)$ and $(2_e^G\otimes L, \eta \otimes \lambda)$ are isomorphic if there is a measurable map $\phi: 2_e^G \otimes K \to 2_e^G \otimes L$ such that
\begin{enumerate}
\item $\dom(\phi(x))=\dom(x)$ for a.e. $x$,
\item $\phi_*\eta \otimes \kappa = \eta \otimes \lambda$,
\item $\phi$ is invertible with measurable inverse,
\item $\phi(gx)=g\phi(x)$ for a.e. $x$ and every $g\in G$ with $g^{-1}\in \dom(x)$ (i.e., $e\in \dom(gx)$). 
\end{enumerate}

\begin{lem}\label{lem:key2}
Let  $\eta$ be an ergodic measurable subgroup of countable group $G$ such that $\eta$-a.e. $H\in 2_e^G$ is infinite. Let $(K,\kappa), (L,\lambda), (M,\mu)$ be standard probability spaces with $H(K,\kappa)=H(L,\lambda)$.  In addition, suppose there exist measurable maps $\alpha:K \to M$ and $\beta:L \to M$ such that $\alpha_*\kappa=\beta_*\lambda=\mu$ and $(M,\mu)$ is nontrivial (i.e., $H(M,\mu)>0$). Then the two Bernoulli shifts $(2_e^G \otimes K, \eta \otimes \kappa)$ and $(2_e^G\otimes L, \eta \otimes \lambda)$ are isomorphic.
\end{lem}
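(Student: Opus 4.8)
The plan is to mimic the proof of Lemma \ref{lem:key} almost verbatim, replacing the Bernoulli shift $G \cc (K^G,\kappa^G)$ by the generalized Bernoulli shift $(2_e^G \otimes K, \eta \otimes \kappa)$ and replacing the group-orbit equivalence relation by the equivalence relation $\cR_{e,M}$ on $2_e^G \otimes M$. First I would form the factor maps $\alpha^{(\eta)}: 2_e^G \otimes K \to 2_e^G \otimes M$ and $\beta^{(\eta)}: 2_e^G \otimes L \to 2_e^G \otimes M$ by post-composing with $\alpha$, resp. $\beta$, on each coordinate while keeping the domain fixed; both push the respective Bernoulli measures forward to $\eta \otimes \mu$. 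The key point is that $(2_e^G \otimes M, \eta \otimes \mu, \cR_{e,M})$ is an aperiodic (because $\eta$-a.e. $H$ is infinite) ergodic (because $\eta$ is ergodic and $(M,\mu)$ is a standard base, so the skew product is ergodic — this needs a short argument, essentially that a relatively invariant set descends to an $\cR_e$-invariant set) p.m.p. equivalence relation, so Theorem \ref{thm:T} supplies an ergodic $U$ in its full group whose orbits are a.e.\ infinite and which a.e.\ moves a point by some $g \in G$.

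Next I would pull $U$ back through $\alpha^{(\eta)}$ and $\beta^{(\eta)}$ to define $T \in [\cR_{e,K}]$ and $S \in [\cR_{e,L}]$ by $Tx = gx$ where $g$ is the (a.e.\ unique, since $H$ infinite forces the stabilizer to be trivial on a set of full measure — one should check this, or at least that $g$ can be chosen measurably) element with $U(\alpha^{(\eta)}(x)) = g\,\alpha^{(\eta)}(x)$, and similarly for $S$. Letting $\chi_K(x) = x(e)$, $\chi_M(x) = x(e)$, etc., and $\cB_K, \cB_M$ the smallest $T$- (resp.\ $U$-) invariant $\sigma$-algebras making these measurable, one checks exactly as before that $(T, 2_e^G\otimes K, \cB_K, \eta\otimes\kappa)$ is Bernoulli relative to $(U, 2_e^G \otimes M, \cB_M, \eta\otimes\mu)$ — the relevant coordinates $\{\chi_K \circ T^i\}$ are the values of $x$ along the $U$-orbit through $\alpha^{(\eta)}(x)$, and since these are distinct coordinates of the product, independence relative to $(\alpha^{(\eta)})^{-1}(\cB_M)$ is immediate. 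The relative entropy computation is identical: $h(T \mid (\alpha^{(\eta)})^{-1}(\cB_M)) = H(K,\kappa) - H(M,\mu) = H(L,\lambda) - H(M,\mu) = h(S \mid (\beta^{(\eta)})^{-1}(\cB_M))$, using that $H(M,\mu) > 0$ so the fiber is nontrivial. Thouvenot's theorem (Theorem \ref{thm:Thouvenot}) then gives an isomorphism $\phi: (2_e^G\otimes K, \cB_K, \eta\otimes\kappa) \to (2_e^G\otimes L, \cB_L, \eta\otimes\lambda)$ with $\phi T = S\phi$ and $\alpha^{(\eta)} = \beta^{(\eta)}\phi$.

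Finally I would globalize $\phi$ to $\Phi: 2_e^G \otimes K \to 2_e^G \otimes L$ by $\Phi(x)(g) = \phi(g^{-1}x)(e)$ for $g \in \dom(x)$ — this is where one must be slightly careful that $\dom(\Phi(x)) = \dom(x)$, which holds because $\alpha^{(\eta)} = \beta^{(\eta)}\phi$ forces $\phi$ to preserve the underlying subgroup $H$, hence $\phi(g^{-1}x)(e)$ is defined precisely when $e \in \dom(g^{-1}x) = g^{-1}\dom(x)$, i.e.\ when $g \in \dom(x)$. Shift-equivariance in the sense of condition (4), invertibility via $\Psi(y)(g) = \phi^{-1}(g^{-1}y)(e)$, and $\beta^{(\eta)}\Phi = \alpha^{(\eta)}$ are all formal. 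To verify $\Phi_*(\eta\otimes\kappa) = \eta\otimes\lambda$ I would disintegrate over $2_e^G \otimes M$: for $z$ in that space, decompose the fiber into the $U$-orbit coordinates $O_0 = \{\tau^n_z(g_0)\}$ (with $g_0 = e$) and translated copies $O_i$, chosen so that $\bigcup_i O_i = \dom(z)$ and the $O_i$ are pairwise disjoint; the corresponding sub-$\sigma$-algebras on the fiber are independent and generate it, $\phi$ handles the $O_0$-block by construction, and shift-equivariance propagates this to every $O_i$ exactly as in Lemma \ref{lem:key}. The main obstacle I anticipate is bookkeeping at the level of domains — ensuring that translating an element $x$ with $\dom(x) = H$ by $g$ keeps track of which coordinates are defined, and that all the measurable selections (of $g$ with $Ux = gx$, of the orbit representatives $g_i$) are genuinely Borel on the non-transitive equivalence relation $\cR_{e,M}$; the ergodic-theory core is a routine transcription of the previous lemma.
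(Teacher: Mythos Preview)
Your proposal is correct and follows essentially the same route as the paper: the paper's proof is a near-verbatim transcription of the proof of Lemma~\ref{lem:key}, using the coordinatewise factor maps $\talpha,\tbeta$ (your $\alpha^{(\eta)},\beta^{(\eta)}$), applying Theorem~\ref{thm:T} to $(2_e^G\otimes M,\eta\otimes\mu,\cR_{e,M})$ to obtain $U$, lifting to $T,S$, invoking Thouvenot, globalizing via $\Phi(x)(g)=\phi(g^{-1}x)(e)$ for $g\in\dom(x)$, and checking $\Phi_*(\eta\otimes\kappa)=\eta\otimes\lambda$ by the identical $O_i$-decomposition of the fiber over each $z$ with $\bigcup_i O_i=\dom(z)$. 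You are in fact slightly more explicit than the paper about the technical points you flag (ergodicity and aperiodicity of $\cR_{e,M}$, measurable choice of $g$, domain bookkeeping for $\Phi$), all of which the paper handles tacitly.
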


\begin{proof}
The proof is similar to the proof of Lemma \ref{lem:key}. Let $\talpha: 2_e^G \otimes K \to 2_e^G \otimes M$ be the map $\talpha(x)(g):=\alpha(x(g))$ for $g\in \dom(x)$. Define $\tbeta:2_e^G \otimes L \to 2_e^G \otimes M$ similarly. Observe that $\talpha_*(\eta\otimes\kappa)= \tbeta_*(\eta\otimes\lambda)=\eta\otimes\mu$. 

%Let $\mu = \alpha_*\kappa$ and $\nu=\beta_*\lambda$. By hypothesis $\mu=\nu$. Without loss of generality, we may assume that $\kappa(k_0)<1$ so that $H(\mu)>0$. 

By Theorem \ref{thm:T} there exists an ergodic $U \in \Aut(2_e^G \otimes M,\eta\otimes\mu)$ such that (i) for a.e. $x\in 2_e^G \otimes M$, the orbit $\{U^ix:~ i \in \Z\}$ is infinite, and (ii) for a.e. $x \in 2_e^G \otimes M$, there exists $g\in G$ such that $Ux=gx$.  

Define $T:2_e^G \otimes K \to 2_e^G \otimes K$ by $T(x)=gx$ where $g\in G$ is such that $U(\talpha(x))=g\talpha(x)$. Similarly, let $S:2_e^G \otimes L \to 2_e^G \otimes L$ be defined by $S(y)=gy$ where $g\in G$ is such that $U(\tbeta(y))=g\tbeta(y)$. Note that for a.e. $(x,y)\in 2_e^G \otimes K \times 2_e^G \otimes L$ with $\talpha(x)=\tbeta(y)$, there is a $g\in G$ such that $Tx = gx$ and $Sy=gy$.

Let $\chi_M: 2_e^G \otimes M \to M$ be the projection map $\chi_M(x)=x(e)$. Let $\cB_M$ be the smallest $U$-invariant sigma-algebra on $2_e^G \otimes M$ for which $\chi_M$ is measurable. Define $\chi_K:K^G \to K, \chi_L: L^G \to L, \cB_K, \cB_L$ similarly. 

Observe that $(T,2_e^G\otimes K, \cB_K, \eta \otimes \kappa)$ and $(S,2_e^G\otimes L,\cB_L, \eta \otimes \lambda)$ are Bernoulli relative to $(U,2_e^G\otimes M,\cB_M, \eta \otimes \mu)$. To see this, note, for example, that $\chi_K$ is a generator for $(T,2_e^G\otimes K, \cB_K, \eta \otimes \kappa)$ and $\{\chi_K \circ T^i : i \in \Z\}$ are jointly independent relative to $(\talpha)^{-1}(\cB_M)$. So %for any finite set $I \subset \Z$, 
%$$H( \bigvee \chi_K \circ T^i | (\alpha^G)^{-1}(\cB_M)) = |I| \left( H(K,\kappa) - H(M,\mu) \right).$$
%Therefore,
$$h(T| (\talpha)^{-1}(\cB_M) ) = H(\chi_K |~ (\talpha)^{-1}(\cB_M) ) = H(K,\kappa) - H(M,\mu).$$
Similarly, $H(L,\lambda) - H(M,\mu) = h(S| (\tbeta)^{-1}(\cB_M) )$. By Theorem \ref{thm:Thouvenot}, there is an isomorphism $\phi: (2_e^G\otimes K, \cB_K, \eta \otimes \kappa) \to (2_e^G\otimes L,\cB_L, \eta \otimes \lambda)$ such that $\phi T = S \phi$ and $\talpha = \tbeta \phi$. 

Define $\Phi:2_e^G\otimes K \to 2_e^G\otimes L$ by $\Phi(x)(e)= \phi(g^{-1}x)(e)$ for $g\in G$, $x\in 2_e^G\otimes K$. Observe that $\Phi$ is measurable with respect to the Borel sigma-algebras of $2_e^G\otimes K$ and $2_e^G\otimes L$ (which are, in general, larger than $\cB_K$ and $\cB_L$). We claim this is the required isomorphism. It is clearly shift-equivariant.

To see that $\Phi$ is invertible, define $\Psi: 2_e^G\otimes L \to 2_e^G\otimes K$ by $\Psi(y)(g) = \phi^{-1}(g^{-1}y)(e)$. Because both $\Phi$ and $\Psi$ are shift-equivariant and $\Phi \Psi(y)(e)=y(e)$, $\Psi\Phi(x)(e)=x(e)$, it follows that $\Psi$ is the inverse of $\Phi$. It is easy to check that $\tbeta \Phi = \talpha$ and $\Phi T = S \Phi$.

To finish, we must show that $\Phi_*\eta \otimes \kappa = \eta\otimes\lambda$. For $z \in 2_e^G\otimes M$, let $X_z=\{x\in 2_e^G\otimes K:~ \talpha(x)=z\}$ and $Y_z=\{y\in 2_e^G\otimes L:~\tbeta(y)=z\}$. Because $\tbeta \Phi = \talpha$, it follows that $\Phi$ maps $X_z$ to $Y_z$. 

For $z\in 2_e^G\otimes M$, let $\zeta_z$ be the fiber measure of $\eta\otimes\kappa$ over $z$. This family of measures is determined (up to measure zero sets) by the property that $\zeta_z$ is supported on $X_z$ and $\eta\otimes\kappa = \int \zeta_z~d\eta\otimes\mu(z)$. Similarly, let $\nu_z$ be the fiber measure of $\eta\otimes\lambda$ over $z$.

Because $\talpha_*(\eta\otimes\kappa) = \tbeta_*(\eta\otimes\lambda) = \eta\otimes\mu$ and $\Phi$ maps $X_z$ to $Y_z$, in order to show that $\Phi_*(\eta\otimes\kappa) = (\eta\otimes\lambda)$, it suffices to show that $\Phi$ restricts to an isomorphism from $(X_z,\zeta_z)$ to $(Y_z,\nu_z)$ for a.e. $z$.

For $g\in \dom(z)$ and $n\in \Z$ let $\tau_z^n(g) \in G$ be the element satisfying $U^n(g^{-1}z)(e)=z(\tau_z^n(g))$. This is well-defined for a.e. $z\in 2_e^G \otimes M$. If $x\in X_z$ then $T^n(g^{-1}x)(e)=x(\tau_z^n(g))$ by definition.

Fix $z \in 2_e^G \otimes M$. Let $e=g_0,g_1,\ldots \in \dom(z)$ be such that if $O_i = \{\tau^n_z(g_i):~ n \in \Z\}$ then $O_i \cap O_j =\emptyset $ whenever $i\ne j$ and $\cup_{i=0}^\infty O_i=\dom(z)$. So $ K^{\dom(z)} = \prod_{i=0}^\infty K^{O_i}$.

Let $\cB_{K,z,0}$ be the restriction of $\cB_K$ to $X_z$. This is the $\sigma$-algebra of $X_z$ generated by $\{\chi_K \circ T^j :~ j \in \Z\}$. More generally, let $\cB_{K,z,i}$ be the sigma-algebra on $X_z$ generated by $\{\chi_K \circ T^j g_i^{-1}:~ j \in \Z\}$. Define $\sigma$-algebras $\cB_{L,z,i}$ on $Y_z$ similarly. 

Because the $O_i$'s are pairwise disjoint, the sigma-algebras $\cB_{K,z,i}$ are independent. Because $\cup O_i = \dom(z)$, these sigma-algebras generate the Borel sigma-algebra of $K^G$ restricted to $X_z$. Similarly statements apply to $L$ in place of $K$. Therefore, it suffices to show that $\Phi$ restricted to $X_z$ determines an isomorphism from $(X_z,\cB_{K,z,i}, \zeta_z)$ to $(Y_z, \cB_{L,z,i}, \nu_z)$ for all $i$. 

By definition, $\phi$ (and therefore $\Phi$) restricted to $X_z$ is an isomorphism from $(X_z, \cB_{K,z,0}, \zeta_z)$ to $(Y_z, \cB_{L,z,0}, \nu_z)$. Note that $(X_{g_i^{-1}z}, \cB_{K,g^{-1}_iz,0},\zeta_{g_i^{-1}z}) = g^{-1}_i (X_z, \cB_{K,z,i}, \zeta_z)$. Because $\Phi$ is shift-equivariant, this implies  $\Phi$ restricted to $X_z$ determines an isomorphism from $(X_z,\cB_{K,z,i}, \zeta_z)$ to $(Y_z, \cB_{L,z,i}, \nu_z)$ for every $i$.

\end{proof}

%\begin{lem}\label{lem:non2atom2}
%Let  $\eta$ be a measurable subgroup of countable group $G$ such that $\eta$-a.e. $H\in 2_e^G$ is infinite.  Let $(K,\kappa), (L,\lambda)$ be standard probability spaces with $H(K,\kappa)=H(L,\lambda)$. In addition, suppose both $(K,\kappa)$ and $(L,\lambda)$ are not two-atom spaces. Then the two Bernoulli shifts $(2_e^G \otimes K, \eta \otimes \kappa)$ and $(2_e^G\otimes L, \eta \otimes \lambda)$ are isomorphic.
%\end{lem}

%\begin{proof}
%This is similar to \ref{lem:non2atom}. We leave the details to the reader.
%\end{proof}

\section{Ornstein groups}

If $G$ is a group, $C<G$ a subgroup and $(K,\kappa)$ a probability space then let $(K^{G/C},\kappa^{G/C})$ be the product space with the $G$-action $gx(fC)=x(g^{-1}fC)$. This is called the {\em generalized Bernoulli shift} over $G/C$ with base space $(K,\kappa)$.

\begin{lem}\label{lem:free}
Let $G$ be a countably infinite group and $C<G$ a finite cyclic subgroup of prime order. Let $N(C)=\{g \in G:~gCg^{-1}=C\}$ be the normalizer of $C$. Suppose that $(K,\kappa)$ is a nontrivial probability space. If $G/N(C)$ is infinite then the action $G \cc ( K^{G/C}, \kappa^{G/C})$ is essentially free.
\end{lem}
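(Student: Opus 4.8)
The plan is to reduce essential freeness to a statement about each nonidentity group element and then to a combinatorial fact about conjugates of $C$. Concretely, it suffices to show that for every $g\in G\setminus\{e\}$ the set $\{x\in K^{G/C}:\ gx=x\}$ is $\kappa^{G/C}$-null: the union of these countably many null sets is null, and its complement is exactly the set of points with trivial stabilizer. Write $\sigma_g$ for the permutation of $G/C$ given by $fC\mapsto gfC$; a coset $fC$ is fixed by $\sigma_g$ precisely when $f^{-1}gf\in C$, and $gx=x$ if and only if $x$ is constant on each orbit of $\sigma_g$. The first, routine, step is the measure-theoretic reduction: if $\sigma_g$ moves infinitely many cosets, then $\{x:gx=x\}$ is null. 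Indeed, the set $B_g$ of cosets moved by $\sigma_g$ is $\sigma_g$-invariant and decomposes into orbits of size $\geq 2$. If some such orbit $O$ is infinite, fix distinct $i_0,i_1,\dots\in O$; then $\{x:x|_O\text{ constant}\}\subseteq\bigcap_N\{x:x(i_0)=\dots=x(i_N)\}$, whose $\kappa^{G/C}$-measure is $\sum_k\kappa(\{k\})^{N+1}\leq(\sup_k\kappa(\{k\}))^N\to 0$ since $\kappa$ is nontrivial and hence $\sup_k\kappa(\{k\})<1$. If instead every orbit in $B_g$ is finite, then $B_g$ contains infinitely many pairwise disjoint orbits $O_1,O_2,\dots$ of size $\geq 2$, and by independence the measure of $\{x:x\text{ constant on each }O_m\}$ is $\prod_m\sum_k\kappa(\{k\})^{|O_m|}\leq\prod_m\sup_k\kappa(\{k\})=0$. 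Either way the fixed set is null.

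It then remains to prove that for each $g\neq e$ the permutation $\sigma_g$ moves infinitely many cosets, i.e.\ that $F_g:=\{fC\in G/C:\ f^{-1}gf\in C\}$ omits infinitely many cosets; this is where the hypotheses enter. If $fC\in F_g$, then $g\in fCf^{-1}$, a group of prime order $p$; since $g\neq e$ and every nontrivial element of $fCf^{-1}$ generates it, this forces $g$ to have order $p$ and $fCf^{-1}=\langle g\rangle$. Hence: if $g$ has order different from $p$ then $F_g=\emptyset$, and since $[G:C]=\infty$ ($G$ infinite, $C$ finite) we are done; likewise $F_g=\emptyset$ if $g$ has order $p$ but $\langle g\rangle$ is not conjugate to $C$. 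In the remaining case $\langle g\rangle=f_0Cf_0^{-1}$ for some $f_0$, and then $fCf^{-1}=\langle g\rangle$ holds exactly when $f_0^{-1}f\in N(C)$, so $F_g$ is contained in the image of the single coset $f_0N(C)$ under $G\to G/C$. Since $C\leq N(C)$, that image is a union of $[N(C):C]$ cosets of $C$, so every coset of $C$ lying in $G\setminus f_0N(C)$ is moved by $\sigma_g$; and $G\setminus f_0N(C)$ is a union of $[G:N(C)]-1$ cosets of $N(C)$, hence contains infinitely many cosets of $C$ because $[G:N(C)]=\infty$ by hypothesis. Thus $\sigma_g$ moves infinitely many cosets, completing the proof.

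The only substantive point is the group theory of the second step: primality of $|C|$ is exactly what makes the condition $f^{-1}gf\in C$ pin down the entire conjugate $fCf^{-1}$, and the conjugates of $C$ are parametrized by $G/N(C)$, which is infinite; together these confine the cosets fixed by any $g\neq e$ to a single $N(C)$-coset. (That the normal-subgroup case $N(C)=G$ must fail is consistent: then a generator of $C$ acts trivially on $K^{G/C}$.) The measure-theoretic reduction is standard once one observes that $\kappa$ is not a point mass, and one need not track whether $[N(C):C]$ is finite, since only $[G:N(C)]=\infty$ is used. I expect the write-up to be short.
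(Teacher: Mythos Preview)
Your proof is correct and follows essentially the same approach as the paper: reduce to showing each $g\ne e$ has null fixed-point set, reduce that to showing $\sigma_g$ moves infinitely many cosets, and then use that primality of $|C|$ forces the fixed cosets to lie in a single $N(C)$-coset. The only cosmetic difference is in the measure-theoretic reduction: the paper selects an infinite $I'_g\subset I_g$ with $gI'_g\cap I'_g=\emptyset$ and uses independence of the events $\{x(gfC)=x(fC)\}$, whereas you argue via the orbit decomposition of $\sigma_g$ on $B_g$; these are equivalent packagings of the same idea.
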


\begin{proof}
For $g\in G$, let $X_g=\{x\in K^{G/C}:~ gx=x\}$.  It suffices to show that $\kappa^{G/C}(X_g)=0$ for every $g\in G\setminus \{e\}$. 

Given $g\in G$, let $I_g = \{ fC \in G/C:~ gfC \ne fC\}$. It is easy to see that if $|I_g| = \infty$ then there exists a subset $I'_g \subset I_g$ with $|I'_g| = \infty$ such that for every $fC \in I'_g$, $gfC \notin I'_g$. Let $X(g,fC)=\{x \in X:~ x(gfC)=x(fC)\}$. Then the events $\{X(g,fC):~fC \in I'_g\}$ are jointly independent. So
$$\kappa^{G/C}(X_g) \le \kappa^{G/C}\left( \bigcap_{fC \in I'_g} X(g,fC) \right) = \prod_{fC \in I'_g} \kappa^{G/C}(X(g,fC)) = 0.$$
So it suffices to show that $|I_g|=\infty$ for every $g \in G \setminus \{e\}$. (This fact was observed earlier in \cite{KT08} Proposition 2.4).

If $fC \notin I_g$ then $gfC = fC$, i.e., $f^{-1}gf \in C$ which is equivalent to $g \in fCf^{-1}$. In particular, if $g \notin fCf^{-1}$ for any $f$, then $|I_g|=\infty$. So suppose that $g \in fCf^{-1}$ for some $f\in G$ and $g\ne e$. We claim that if $f_2C \notin I_g$ then $f_2N(C) = fN(C)$. Indeed, in this case  $g \in fCf^{-1} \cap f_2Cf_2^{-1}$. Because $C$ is a cyclic group of prime order and $g$ is nontrivial, this implies $fCf^{-1}=f_2Cf_2^{-1}$ which implies $f_2^{-1}f \in N(C)$, i.e., $fN(C)=f_2N(C)$. Because $G/N(C)$ is infinite, $I_g$ is infinite as required.

\end{proof}

If $p_1,\ldots,p_n$ are non-negative real numbers then let $H(p_1,\ldots,p_n):=-\sum_{i=1}^n p_i \log(p_i)$ where $0\log(0):=0$ by convention.

\begin{lem}
For any real numbers $t,r$ with $0<t<1$ and $H(t,1-t)< r$, there exists a standard probability space $(L,\lambda)$ with an element $l_1 \in L$ such that $\lambda(\{l_1\}) = t$ and $H(L,\lambda)=r$.
\end{lem}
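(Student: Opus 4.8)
The plan is to build $(L,\lambda)$ by declaring $l_1$ to be a single atom of mass $t$ and filling in the remaining mass $1-t$ with a rescaled copy of an auxiliary probability space whose Shannon entropy we are free to prescribe; additivity then follows from the grouping property of Shannon entropy. Concretely, since $0<1-t<1$ and $H(t,1-t)<r<\infty$, the number
$$s := \frac{r - H(t,1-t)}{1-t}$$
is a strictly positive real. I first produce a standard probability space $(M,\mu)$ with $H(M,\mu)=s$, then set $L := \{l_1\}\sqcup M$ (a standard Borel space) and $\lambda := t\,\delta_{l_1} + (1-t)\mu$, so that $\lambda(\{l_1\})=t$. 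Writing $q_j := \mu(\{m_j\})$ for the atoms $m_j$ of $\mu$ (and noting $\mu$ is purely atomic because $s<\infty$), the measure $\lambda$ is purely atomic and the grouping identity gives
$$H(L,\lambda) = -t\log t - \sum_j (1-t)q_j\log\big((1-t)q_j\big) = H(t,1-t) + (1-t)\sum_j\big(-q_j\log q_j\big) = H(t,1-t) + (1-t)s = r,$$
as required.

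For the auxiliary space I plan to use a one-parameter family whose entropy sweeps out all of $(0,\infty)$. A convenient choice is the geometric law on $M=\{0,1,2,\dots\}$ given by $\mu_\theta(\{k\}) = (1-\theta)\theta^k$ for $\theta\in(0,1)$; a short computation gives $H(M,\mu_\theta) = H(\theta,1-\theta)/(1-\theta) =: g(\theta)$, and $g$ is continuous on $(0,1)$ with $g(\theta)\to 0$ as $\theta\to 0^+$ and $g(\theta)\to\infty$ as $\theta\to 1^-$ (in fact $g'(\theta) = -\log\theta/(1-\theta)^2 > 0$, so $g$ is a strictly increasing bijection $(0,1)\to(0,\infty)$). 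By the intermediate value theorem there is $\theta$ with $g(\theta)=s$, and $(M,\mu_\theta)$ is the desired space. Alternatively one can avoid the geometric computation entirely: for $s\in(0,\infty)$ pick $n\in\N$ with $n\log 2 \ge s$ and apply the intermediate value theorem to the continuous map $p\mapsto nH(p,1-p)$ on $[0,1/2]$, whose range is $[0,n\log 2]$, realizing $s$ as the entropy of the product of $n$ two-atom spaces.

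There is no serious obstacle; the whole argument is routine, and it essentially mirrors the bookkeeping already used in the proof of Theorem \ref{thm:main}. The only points deserving a line of care are (i) the grouping identity for $H(L,\lambda)$ when $\mu$ has infinitely many atoms — legitimate because every series involved has non-negative terms, so no conditional-convergence issue arises, and $s<\infty$ forces $\mu$, hence $\lambda$, to be purely atomic so that the defining formula for $H$ applies — and (ii) verifying that the chosen family of auxiliary spaces does attain every value of $s$ in $(0,\infty)$, which is precisely the continuity-plus-surjectivity statement recorded above.
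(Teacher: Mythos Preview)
Your proof is correct and follows exactly the paper's approach: take an auxiliary space $(N,\nu)$ with $H(N,\nu)=\frac{r-H(t,1-t)}{1-t}$, set $L=\{l_1\}\sqcup N$ with $\lambda=t\,\delta_{l_1}+(1-t)\nu$, and invoke the grouping identity. The only difference is that the paper simply asserts the existence of such an $(N,\nu)$ without comment, whereas you supply two explicit constructions (geometric laws or products of biased coins) and carefully justify the entropy computation; this extra care is harmless and arguably an improvement.
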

\begin{proof}
Let $(N,\nu)$ be a probability space with $H(t,1-t) + (1-t)H(N,\nu) = r$. Let $L$ be the disjoint union of $\{l_1\}$ and $N$. Define the measure $\lambda$ on $L$ by $\lambda(\{l_1\})=t$ and $\lambda(B) = (1-t)\nu(B)$ for all Borel $B\subset N$. Then  $H(L,\lambda) = H(t,1-t) + (1-t)H(N,\nu) = r.$
\end{proof}

%\begin{lem}\label{lem:2atom}
%Let $G$ be a countably infinite group. Suppose $G$ contains a nontrivial element $g_0\in G$ of finite prime order so that if $C=\langle g_0\rangle$ then $G/N(C)$ is infinite. If $(K,\kappa)$ is a non-trivial two-atom space (e.g., $H(K,\kappa)>0$) then there is a non-two-atom space $(L,\lambda)$ with $H(K,\kappa)=H(L,\lambda)>0$ such that the Bernoulli shifts $G \cc (K^G,\kappa^G)$ and $G \cc (L^G,\lambda^G)$ are isomorphic.

%\end{lem}

\begin{proof}[Proof of Theorem \ref{thm:2atom}]
Let $G$ be a countably infinite group. Suppose $G$ contains a nontrivial element $g_0\in G$ of finite prime order so that if $C=\langle g_0\rangle$ then $G/N(C)$ is infinite. Let $(K,\kappa)$ be a non-trivial two-atom space (e.g., $H(K,\kappa)>0$). By Theorem \ref{thm:main}, to prove $G$ is Ornstein it suffices to prove that there is a non-two-atom space $(L,\lambda)$ with $H(K,\kappa)=H(L,\lambda)>0$ such that the Bernoulli shifts $G \cc (K^G,\kappa^G)$ and $G \cc (L^G,\lambda^G)$ are isomorphic.

Let $p>1$ be the order of $g$. We claim that there is a non-two-atom space $(L,\lambda)$ with $H(L,\lambda)=H(K,\kappa)$ and a standard nontrivial probability space $(M,\mu)$ and factor maps $\alpha:(K^p, \kappa^p) \to (M,\mu)$, $\beta:(L^p,\lambda^p) \to (M,\mu)$. Moreover, we require that if $\sigma_K:K^p \to K^p$ denotes the shift map $\sigma_K(x_0,\ldots,x_{p-1})=(x_1,\ldots,x_{p-1},x_0)$ then $\alpha \sigma_K=\alpha$. We also require that if $\sigma_L:L^p \to L^p$ is defined similarly then $\beta \sigma_L=\beta$.

To prove the claim, we may assume without loss of generality that $K=\{k_0,k_1\}$ with $\kappa(\{k_0\}) \le \kappa(\{k_1\})$. Let $\epsilon=\kappa(\{k_0\})$. By the previous lemma,  there exists a probability space $(L,\lambda)$ with an element $l_1 \in L$ such that $\lambda(\{l_1\}) = (1-\epsilon^p)^{1/p}$ and $H(L,\lambda)=H(K,\kappa)$.  The space $(L,\lambda)$ must not be a two-atom space  since otherwise $H(K,\kappa)=H(\epsilon,1-\epsilon)=H( (1-\epsilon^p)^{1/p}, 1 - (1-\epsilon^p)^{1/p}) = H(L,\lambda)$ implies $1-\epsilon=(1-\epsilon^p)^{1/p}$ which contradicts $0<\epsilon<1$ and $p>1$. Let $M:=\{0,1\}$, $\mu(\{0\}):=\epsilon^p$, $\mu(\{1\}):=1-\epsilon^p$, $\alpha(x_0,\ldots,x_{p-1}) := 0$ if and only if $(x_0,\ldots,x_{p-1}) = (k_0,\ldots,k_0)$ and $\beta(x_0,\ldots,x_{p-1}) :=1$ if and only if $(x_0,\ldots,x_{p-1}) = (l_1,\ldots,l_1)$. This proves the claim.

Let $[K^p] = K^p/\sigma_K$. That is, $[K^p]$ is the set of all $\sigma_K$-orbits in $K^p$. Let $[\kappa^p]$ be the probability measure on $[K^p]$ obtained by pushing $\kappa^p$ forward under the quotient map $K^p \to [K^p]$. Define $([L^p], [\lambda^p])$ similarly. 

%Let $C=\langle g_0\rangle$ be the cyclic subgroup of order $p$.
 Let $([K^p]^{G/C}, [\kappa^p]^{G/C})$ be the generalized Bernoulli shift over $G/C$ with base space $([K^p], [\kappa^p])$. This system is a factor of $G \cc (K^G,\kappa^G)$ via the map $\pi_K: K^G \to [K^p]^{G/C}$ defined by $\pi_K(x)(gC)=[ x(g),x(gg_0),\ldots, x(gg_0^{p-1})]$ which denotes the $\sigma_K$-orbit of $(x(g),x(gg_0),\ldots, x(gg_0^{p-1}))$. Similarly, define $\pi_L:(L^G,\lambda^G) \to ([L^p]^{G/C},[\lambda^p]^{G/C})$.

Let $\talpha:([K^p]^{G/C}, [\kappa^p]^{G/C}) \to (M^{G/C},\mu^{G/C})$ be the factor map $\talpha(x)(gC)=\alpha(x(gC))$. This involves a slight abuse of notation because $\alpha$ was defined from $K^p$ to $M$ instead of $[K^p]$ to $M$. However because $\alpha \sigma_K=\alpha$, $\alpha$ factors through $[K^p]$. Similarly, define $\tbeta:([L^p]^{G/C}, [\lambda^p]^{G/C}) \to (M^{G/C},\mu^{G/C})$.

Let $\Upsilon: M^{G/C} \to [0,1]$ be a Borel isomorphism. Let $Z$ be the set of all $z\in M^{G/C}$ such that $\Upsilon(z) \le \Upsilon(g^i_0 z)$ for all $i$. Because $\Upsilon$ is Borel, $Z$ is a Borel set. Because the action of $G$ on $(M^{G/C},\mu^{G/C})$ is essentially free (by Lemma \ref{lem:free}), $\{Z, g_0Z, \ldots, g_0^{p-1}Z\}$ partitions $M^{G/C}$ up to a set of measure zero.

If $(A,\rho)$ is a probability space and $B \subset A$ is Borel, then the {\em normalized restriction} of $\rho$ to $B$ is the probability measure $\rho_B$ on $B$ defined by
$$\rho_B(E)=\frac{\rho(E)}{\rho(B)}$$
for all Borel $E \subset B$.

Define $\omega: M^{G/C} \to 2^G$ by $\omega(z) := \{g \in G:~ g^{-1}z \in Z\}$. This map is equivariant: $\omega(fz)=f\omega(z)$. Therefore, $\omega_*\mu^{G/C}$ is an invariant measure on $2^G$ and its normalized restriction $\eta$ to $2_e^G$ is a measurable subgroup. Note that $\eta=\omega_*\mu^{G/C}_Z$ where $\mu^{G/C}_Z$ is the normalized restriction of $\mu^{G/C}$ to $Z$.

The measure $\eta$ is supported on the collection $\cC \subset 2_e^G$ of all subsets $H \subset G$ with the property that $e\in H$ and $\{H, Hg_0, \ldots, Hg_0^{p-1}\}$ is a partition of $G$. In particular, $\eta$-a.e. $H \in 2_e^G$ is infinite.

Let $\cC \otimes K^p \subset 2_e^G \otimes K^p$ be the set of all functions $x:\dom(x) \to K^p$ where $\dom(x) \in \cC$. Define $\cC \otimes L^p$ similarly. Since $\eta \otimes \kappa^p$ is supported on $\cC \otimes K^p$, without loss of generality we may consider it as a measure on $\cC \otimes K^p$. Similarly, $\cC \otimes \lambda^p$ is a measure on $\cC \otimes L^p$.

%By Theorem \ref{thm:T} there is an isomorphism $U:(2^G_e,\eta) \to (2^G_e,\eta)$ such that (i) $U$ is aperiodic, (ii) $U \in [\cR_e]$.

Let $X=(\talpha \pi_K )^{-1}(Z)$ and $Y=(\tbeta\pi_L)^{-1}(Z)$. Define $\Omega_K:X \to \cC \otimes K^p$ by $\Omega_K(x)(g)=(x(g),x(gg_0),\ldots, x(gg_0^{p-1}))$ for $g\in \dom(\Omega_K(x)):=\omega \talpha \pi_K(x) = \{g\in G:~g^{-1}x\in X\}$. By definition of $\cC$,  $(\Omega_K)_*(\kappa^G_X)=\eta\otimes \kappa^p$ where $\kappa^G_X$ is the normalized restriction of  $\kappa^G$ to $X$. Define $\Omega_L: Y \to \cC \times L^p$ similarly.

For $x\in \cC \otimes K^p$ and $0\le i \le p-1$, let $x_i:\dom(x) \to K$ be the projection to the $i$-th coordinate. Thus $x(g)=(x_0(g),\ldots, x_{p-1}(g))$ for $g\in \dom(x)$. Of course, we define $y_i$ similarly if $y\in \cC \otimes L^p$.

By abuse of notation, we let $\Omega_K^{-1}:\cC \otimes K^p \to K^G$ denote the map $\Omega_K^{-1}(x)(g) = x_i(g')$ where $g',i$ are uniquely determined by: $g=g'g_0^i$, $0\le i \le p-1$, $g' \in \dom(x)$. Note that $\Omega_K^{-1}\Omega_K: X \to X$ is the identity map (but $\Omega_K\Omega_K^{-1}$ is not necessarily well-defined on the domain of $\Omega_K^{-1}$, so $\Omega_K^{-1}$ is only a right-inverse). Define $\Omega_L^{-1}$ similarly.
 
 By Lemma \ref{lem:key2} there is an isomorphism $\phi:(\cC \otimes K^p,\eta\otimes \kappa^p) \to  (\cC \otimes L^p, \eta\otimes \lambda^p)$. 

%For $x\in K^G$, let $T(x) \in X$ be the unique element such that $g_0^i x = T(x)$ for some $i$. This is well-defined a.e. Define $S:L^G\to Y$ similarly.

Define $\Phi:K^G \to L^G$ as follows: if $x\in X$ then $\Phi(x)=\Omega_L^{-1}\phi\Omega_K(x)$. For $0\le i \le p-1$, we define $\Phi(g_0^ix) = g_0^i\Phi(x)$. Because $X,g_0X,\ldots, g_0^{p-1}X$ partitions $K^G$ (up to a set of measure zero), this defines $\Phi$ on a full measure subset of $K^G$ (which is all that we require). We claim that this is the desired isomorphism.

First we show that $\Phi$ is equivariant. If $x\in X, g \in G$ and $gx \in X$ then $\Phi(gx) = g\Phi(x)$ because $\Omega_K, \phi$ and $\Omega_L^{-1}$ are all equivariant (on their domains). So for any $i,j$, 
$$ \Phi((g_0^jgg_0^{-i})g_0^ix) = \Phi(g_0^jgx) = g_0^jg\Phi(x) = (g_0^jgg_0^{-i})\Phi(g_0^ix).$$
This implies $\Phi$ is equivariant because a.e. element $y \in K^G$ can be written as $y=g_0^ix$ (for a unique $0\le i \le p-1$ and $x\in X$) and an arbitrary element $f\in G$ can be written as $f=g_0^jgg_0^{-i}$ for some $g\in G$ with $gx \in X$. Indeed, we let $j$ be determined by the property that $g_0^{-j}fg_0^ix \in X$ then define $g=g_0^{-j}fg_0^i$.

Next we show that $\Phi$ is invertible. For this define $\Psi:L^G \to K^G$ as follows: if $y\in Y$ then $\Psi(x)=\Omega_K^{-1}\phi^{-1}\Omega_L(x)$. For $0\le i \le p-1$, we define $\Psi(g_0^iy) = g_0^i\Psi(y)$. Because $Y,g_0Y,\ldots, g_0^{p-1}Y$ partitions $L^G$ (up to a set of measure zero), this defines $\Psi$ on a full measure subset of $L^G$ (which is all that we require). By an argument similar to the one above, $\Psi$ is equivariant.

If $x\in X$ then $\Psi \Phi x = \Omega_K^{-1}\phi^{-1}\Omega_L \Omega_L^{-1}\phi\Omega_K(x) = x$, i.e., $\Psi\Phi$ restricts to the identity map on $X$. Because $\Phi$ and $\Psi$ are equivariant, $\Psi\Phi$ restricts to the identity map on $g_0^iX$ for every $0\le i \le p-1$. Because $X,g_0X,\ldots, g_0^{p-1}X$ partitions $K^G$ (up to a set of measure zero), this implies $\Psi\Phi$ is the identity map on $K^G$. Similarly, $\Phi\Psi$ is the identity map on $L^G$.

Finally, we claim that $\Phi_*\kappa^G =\lambda^G$. Because $\Phi$ is equivariant it suffices to prove that $\Phi$ restricted to $X$ pushes $\kappa^G_X$ forward to $\lambda^G_Y$. This is true because $(\Omega_K)_*\kappa^G_X = \eta \otimes \kappa^p$, $\phi_*(\eta\otimes \kappa^p) = \eta\otimes \lambda^p$ and $(\Omega_L^{-1})_*(\eta\otimes \lambda^p) = \lambda^G_Y$.
\end{proof}

%\begin{proof}[Proof of Theorem \ref{thm:main}]
%Recall Stepin's result \cite{St75} that if $G$ contains an Ornstein subgroup $H<G$ then $G$ is itself Ornstein. Since countably infinite abelian groups are Ornstein (by \cite{OW87}), we may assume $G$ is a torsion group that does not contain any infinite abelian subgroups. By the previous lemma, this implies there exists an infinite subgroup $H<G$ which contains a finite cyclic subgroup $C<G$ of prime order with finite normalizer. It follows from Lemmas \ref{lem:2atom} and \ref{lem:non2atom} above that $H$ is Ornstein. Hence $G$ is itself Ornstein by Stepin's Theorem.
%\end{proof}


\begin{thebibliography}{10000000}


\bibitem[Ba05]{Ba05} K. Ball. \textit{Factors of independent and identically distributed processes with non-amenable group actions}.  Ergodic Theory Dynam. Systems  25  (2005),  no. 3, 711--730.

%\bibitem[Bo08a]{Bo08a} L. Bowen. \textit{A measure-conjugacy invariant for actions of free groups}. To appear in the Annals of Mathematics.

%\bibitem[Bo08b]{Bo08b} L. Bowen. \textit{Measure conjugacy invariants for actions of countable sofic groups}. arXiv:0804.3582, to appear in the Journal of the A.M.S.


\bibitem[Bo10]{Bo10} L. Bowen. \textit{Measure conjugacy invariants for actions of countable sofic groups}.  J. Amer. Math. Soc. 23 (2010), 217--245. 

\bibitem[Bo11]{Bo11} L. Bowen. \textit{Weak isomorphisms between Bernoulli shifts}.  To appear in Israel Journal of Mathematics. 

%\bibitem[CK10]{CK10} C.T. Conley and A. Kechris. \textit{Measurable chromatic and independence numbers for ergodic graphs and group actions}, preprint, 2010.

%\bibitem[Da06]{Da06} A. I. Danilenko. \textit{Explicit solution of Rokhlin's problem on homogeneous spectrum and applications}.  Ergodic Theory Dynam. Systems  26  (2006),  no. 5, 1467--1490.

%\bibitem[DGRS08]{DGRS08} A.H. Dooley, V.Ya. Golodets, D. Rudolph and S.D. Sinel'shchikov. \textit{ Non-Bernoullian systems with completely positive entropy for a class of amenable groups}, Ergodic Theory and Dynamical Systems (2008),  no. 1, 87--124.

\bibitem[DP02]{DP02} A. I. Danilenko and K. K. Park. \textit{Generators and Bernoullian factors for amenable actions and cocycles on their orbits}. Ergodic Theory Dynam. Systems 22 (2002), no. 6, 1715--1745.

%\bibitem[Ga05]{Ga05} D. Gaboriau. \textit{Examples of groups that are measure equivalent to the free group}.  Ergodic Theory Dynam. Systems  25  (2005),  no. 6, 1809--1827.

%\bibitem[Gl03]{Gl03} E. Glasner. \textit{Ergodic theory via joinings.}
%Mathematical Surveys and Monographs, 101. American Mathematical Society, Providence, RI, 2003. xii+384 pp.

%\bibitem[Ka07]{Ka07} A. Katok \textit{Fifty years of entropy in dynamics: 1958--2007}.  J. Mod. Dyn.  1  (2007),  no. 4, 545--596.



\bibitem[Ke10]{Ke10} A. S. Kechris. \textit{Global aspects of ergodic group actions}. Mathematical Surveys and Monographs, 160. American Mathematical Society, Providence, RI, 2010. xii+237 pp.

%\bibitem[Ki84]{Ki84} J. C. Kieffer. \textit{A simple development of the Thouvenot relative isomorphism theory}. Ann. Probab. 12 (1984), no. 1, 204--211.

\bibitem[KL1]{KL1} D. Kerr and H. Li. \textit{Entropy and the variational principle for actions of sofic groups}. arXiv:1005.0399 

\bibitem[KL2]{KL2} D. Kerr and H. Li. \textit{Bernoulli actions and infinite entropy}. arXiv:1005.5143 

\bibitem[Ko58]{Ko58} A. N. Kolmogorov. \textit{ A new metric invariant of transient dynamical systems and automorphisms in Lebesgue spaces}. Dokl. Akad. Nauk SSSR (N.S.)  119  1958 861--864.

\bibitem[Ko59]{Ko59} A. N. Kolmogorov. \textit{ Entropy per unit time as a metric invariant of automorphisms}.  Dokl. Akad. Nauk SSSR  124  1959 754--755. 

\bibitem[KS79]{KS79} M. Keane and M. Smorodinsky. \textit{Bernoulli schemes of the same entropy are finitarily isomorphic}.  Ann. of Math. (2)  109  (1979), no. 2, 397--406.

\bibitem[KT08]{KT08} A. Kechris and T. Tsankov. \textit{Amenable actions and almost invariant sets}. Proc. Amer. Math. Soc. 136(2) (2008), 687--697.

%\bibitem[LM95]{LM95}  D. Lind and B. Marcus. \textit{An introduction to symbolic dynamics and coding}.  Cambridge University Press, Cambridge, 1995. xvi+495 pp.

%\bibitem[Ly06]{Ly06} R. Lyons. {\it personal communication}.

\bibitem[Me93]{Me93} R. Mercer. \textit{The full group of a countable measurable equivalence relation}. 
Proc. Amer. Math. Soc. 117 (1993), no. 2, 323--333.

\bibitem[Ol91]{Ol91} A. Yu. Ol'shanskii. \textit{Geometry of defining relations in groups}. Translated from the 1989 Russian original by Yu. A. Bakhturin. Mathematics and its Applications (Soviet Series), 70. Kluwer Academic Publishers Group, Dordrecht, 1991. xxvi+505 pp. 

\bibitem[Or70a]{Or70a} D. Ornstein. \textit{Bernoulli shifts with the same entropy are isomorphic}.  Advances in Math.  4  (1970) 337--352.

\bibitem[Or70b]{Or70b} D. Ornstein. \textit{Two Bernoulli shifts with infinite entropy are isomorphic}.  Advances in Math.  5  (1970) 339--348.

\bibitem[Os11]{Os11} D. Osin, private communication.

\bibitem[OW87]{OW87} D. Ornstein and B. Weiss. \textit{Entropy and isomorphism theorems for actions of amenable groups}.  J. Analyse Math.  48  (1987), 1--141.

%\bibitem[Pa69]{Pa69} W. Parry. \textit{Entropy and generators in ergodic theory}. W. A. Benjamin, Inc., New York-Amsterdam 1969 xii+124 pp.

\bibitem[RW00]{RW00} D. J. Rudolph and B. Weiss. \textit{Entropy and mixing for amenable group actions}. Ann. of Math. (2) 151 (2000), no. 3, 1119--1150.

\bibitem[Si59]{Si59} Ya. G. Sina\u\i. \textit{On the concept of entropy for a dynamic system}.   Dokl. Akad. Nauk SSSR  124  (1959) 768--771.

\bibitem[Si62]{Si62} Ya. G. Sina\u\i. \textit{A weak isomorphism of transformations with invariant measure}.  Dokl. Akad. Nauk SSSR  147  (1962) 797--800. 

\bibitem[St75]{St75} A. M. Stepin. \textit{ Bernoulli shifts on groups}.  Dokl. Akad. Nauk SSSR  223  (1975), no. 2, 300--302.

\bibitem[St76]{St76} A. M. Stepin. \textit{ Bernoulli shifts on groups and decreasing sequences of partitions}. Proceedings of the Third Japan-USSR Symposium on Probability Theory (Tashkent, 1975), pp. 592--603. Lecture Notes in Math., Vol. 550, Springer, Berlin, 1976.

\bibitem[Th75]{Th75} J-P. Thouvenot. \textit{Quelques propri\'et\'es des syst\`emes dynamiques qui se d\'ecomposent en un produit de deux syst\`emes dont l'un est un sch\'ema de Bernoulli}. Conference on Ergodic Theory and Topological Dynamics (Kibbutz, Lavi, 1974). Israel J. Math. 21 (1975), no. 2-3, 177--207.


%\bibitem[Wi70]{Wi70} R. F. Williams. \textit{Classification of one dimensional attractors.}  1970  Global Analysis (Proc. Sympos. Pure Math., Vol. XIV, Berkeley, Calif., 1968)  pp. 341--361 Amer. Math. Soc., Providence, R.I. 
%\bibitem[We08]{We08} B. Weiss. personal communication.

%\bibitem[Wi73]{Wi73} R. F. Williams. \textit{Classification of subshifts of finite type.}  Ann. of Math. (2)  98  (1973), 120--153; errata, ibid. (2) 99 (1974), 380--381.

%\bibitem[Zi78]{Zi83} R. J. Zimmer. \textit{Induced and amenable ergodic actions of Lie groups}.  Ann. Sci. ƒcole Norm. Sup. (4)  11  (1978), no. 3, 407--428.

%\bibitem[Zi84]{Zi84} R. J. Zimmer. \textit{Ergodic theory and semisimple groups}. Monographs in Mathematics, 81. BirkhŠuser Verlag, Basel, 1984. 

\end{thebibliography}
\end{document}